\numberwithin{equation}{section}
\newtheorem{theorem}{Theorem}[section]
\newtheorem{proposition}[theorem]{Proposition}
\newtheorem{corollary}[theorem]{Corollary}
\newtheorem{lemma}[theorem]{Lemma}
\theoremstyle{definition}
\newtheorem{definition}[theorem]{Definition}
\newtheorem{notation}[theorem]{Notation}
\newtheorem{example}[theorem]{Example}
\newtheorem{remark*}[theorem]{}
\theoremstyle{remark}
\newtheorem{remark}[theorem]{Remark}
\newcommand{\rar}{\rightarrow}
\newcommand{\C}{\mathbb C}
\newcommand{\Z}{\mathbb Z}
\newcommand{\N}{\mathbb N}
\newcommand{\tr}{\mathrm{tr}}
\def\la{\langle}
\def\ra{\rangle}
\begin{document}
\title{A note on irreducible quadrilaterals of
  $II_1$ factors}

\author[K C Bakshi]{Keshab Chandra Bakshi}
\address{Chennai Mathematical Institute, Chennai, INDIA}
\email{bakshi209@gmail.com, kcbakshi@cmi.ac.in}

\author[V P Gupta]{Ved Prakash Gupta}
\address{School of Physical Sciences, Jawaharlal Nehru University, New Delhi, INDIA}
\email{ved.math@gmail.com, vedgupta@mail.jnu.ac.in}
\thanks{The first named author was supported  by a
  postdoctoral fellowship of the National Board for Higher Mathematics (NBHM),
  India.}

\begin{abstract}
 Given any quadruple $(N, P, Q, M)$ of $II_1$-factors with finite index,
 the notions of interior and exterior angles between $P$ and $Q$ were
 introduced in \cite{BDLR2017}. We determine the possible values of
 these angles in terms of the cardinalities of the Weyl groups of the
 intermediate subfactors when $(N, P, Q, M)$ is an irreducible
 quadrilateral and the subfactors $N \subset P$ and $N \subset Q$ are
 both regular. For an arbitrary irreducible quadruple, an attempt is
 made to determine the values of angles by deriving expressions for
 the angles in terms of the common norm of two naturally arising
 auxiliary operators and the indices of the intermediate subfactors of
 the quadruple. Finally, certain bounds on angles between $P$ and $Q$
 are obtained when $N \subset P$ is regular, which enforce some
 restrictions on the index of $N \subset Q$ in terms of that of $N
 \subset P$.

\end{abstract}
\maketitle
\section{Introduction} 
A quadrilateral is a quadruple $(N, P, Q, M)$ of $II_1$-factors such
that $N \subset P, Q \subset M$, $N = P \wedge Q$, $M = P \vee Q$ and
$[M:N] <\infty$; it is called irreducible if $N \subset M$
is irreducible.  The Weyl group of a finite index $II_1$-subfactor $N
\subset M$ is the quotient group $G:=\mathcal{N}_M(N)/\mathcal{U}(N)$,
where $\mathcal{N}_M(N)$ denotes the group of unitary normalizers of
$N$ in $M$, i.e., $\mathcal{N}_M(N):=\{u \in \mathcal{U}(M) : u N u^*
= N\}$. This article concentrates mainly on the analysis of such
quadrilaterals from the perspectives of (a) calculating the interior
and exterior angles between $P$ and $Q$ as was introduced in
\cite{BDLR2017}, (b) understanding the Weyl group of $N \subset M$ in
terms of those of $N \subset P$ and $N \subset Q$, and (c)
establishing a relationship between the above two aspects.

 Unlike the notion of set of angles by Sano and Watatani (\cite{SW}),
 the interior and exterior angles are both single entities and are
 seemingly more calculable, as we show in \Cref{examples} by making
 some explicit calculations.  As an important application of the
 notion of interior angle, the authors in \cite{BDLR2017} were able to
 improve a result of Longo \cite{Lon} by providing a better bound for
 the number of intermediate subfactors of a given irreducible subfactor.

 A natural question that struck us, after the appearance of
 \cite{BDLR2017}, was to determine the possible set of values that
 the interior and exterior angles can attain.  This article is devoted to
 this theme.  In general, it looks like a tough nut to crack. However,
 in the irreducible set up, we see that these angles take some definitive
 values.

 In \Cref{angles}, we discuss various generalities and formulae
 related to the interior and  exterior angles and employ them to compute
 angles between two intermediate subfators associated with a quadruple
 of crossed product algebras.

 In \Cref{quadrilaterals-regularity}, our main focus is on irreducible
 quadrilaterals $(N, P, Q, M)$ for which $N\subset P$ and $N\subset Q$
 are both regular. Recall that an unital inclusion of von Neumann
 algebras $A \subset B$ is said to be regular if $\mathcal{N}_B(A)'' =
 B$. Jones, in \cite{Jon}, had asked whether an irreducible regular
 subfactor is always a group subfactor. Making use of a theorem of
 Sutherland \cite{Sut} on vanishing of cohomologies, Popa \cite{PiPo2}
 and Kosaki \cite{Kos} (for properly infinite case) answered Jones'
 question in the affirmative, which was announced earlier for the
 hyperfinite case by Ocnenanu in 1986. Later, Hong gave an explicit
 realization of the same in \cite{Hong}. Using Hong's technique, we
 deduce (in \Cref{hk-generate-g}) that an irreducible quadrilateral
 $(N, P, Q, M)$ with regular $N \subset P$ and $N \subset Q$ can be
 realized as a quadrilateral of crossed product algebras through outer
 actions of Weyl groups. Using this realization and the calculations
 of \Cref{examples}, we provide a direct relationship between the
 interior and exterior angles between $P$ and $Q$ and the Weyl groups
 of $N \subset P$ and $N \subset Q$ in:\smallskip
 
\noindent{\bf \Cref{betacomutationforregularquadrilateral}} {\em Let
  $(N,P,Q,M)$ be an irreducible quadrilateral such that $N\subset P$
  and $N\subset Q$ are both regular. Then, $\alpha(P,Q)=\pi/2$,
  i.e., $(N, P, Q, M)$ is a commuting square, and
 $$\cos \big(\beta(P,Q)\big)=\displaystyle \frac{\frac{|G|}{\lvert H\rvert
      \lvert K\rvert}-1}{\sqrt{[G:H]-1}\sqrt{[G:K]-1}},$$
  where $H, K$ and $G$ denote the Weyl groups of $N \subset P$, $N
  \subset Q$ and $N \subset M$, respectively.

  In particular, $(N, P, Q, M)$ is a cocommuting square if and only if
  $G = HK$.  }\smallskip

\Cref{values} dwells around the main theme of this article, viz., to
determine the possible values of the interior and exterior angles. We
first derive expressions for the angles in terms of the common norm
$\lambda$ of two naturally arising auxiliary operators and the indices
of the intermediate subfactors of the quadruple (in
\Cref{formulaalphabeta}).  Then, in the irreducible setup, we exploit
these expressions to obtain some definitive values for angles by
making use of above relationship between angles and Weyl groups, a
theorem of Popa \cite{pop} wherein he determines the possible values
taken by the set $\Lambda(M, N)$ of relative dimensions of
projections, and relating $\lambda$ with certain polynomials $P_n(x),
n \geq 0$, which are near relatives of the Chebyshev polynomials as
introduced by Jones in \cite{Jon}. The results that we prove
are:\smallskip

\noindent{\bf \Cref{valuesofangle}} {\em Let $(N,P,Q,M)$ be a
  quadruple with $N\subset M$ irreducible, $r:= \frac{[Q:N]}{[M:P]}$, $\tau:=[M:N]^{-1}$ and let $0<t\leq 1/2$ be such that $t(1-t)=\tau$. If
  $r/\lambda \geq t$, then,
$$\cos(\alpha(P,Q)) \leq
\frac{[P:N][Q:N](1-t)-1}{\sqrt{[P:N]-1}\sqrt{[Q:N]-1}}$$ and $$
\cos(\beta(P,Q))\leq \displaystyle
\frac{\frac{1}{t}-1}{\sqrt{[M:P]-1}\sqrt{[M:Q]-1}}.$$
And, if $r/\lambda < t$, then, 
$$ \cos(\alpha(P,Q)) =
\displaystyle\frac{[P:N][Q:N]\frac{P_k(\tau)}{P_{k-1}(\tau)}-1}{\sqrt{[P:N]-1}\sqrt{[Q:N]-1}}
$$
and
$$\cos(\beta(P,Q))=\displaystyle \frac{\frac{P_k(\tau)}{\tau
    P_{k-1}(\tau)}-1}{\sqrt{[M:P]-1}\sqrt{[M:Q]-1}}$$ for some $k\geq
0$.}
\smallskip

\noindent{\bf \Cref{valuesofbeta}} {\em  Let $(N,P,Q,M)$ be an irreducible
  quadrilateral such that $N\subset P$ and $N\subset Q$ are
both  regular and suppose $[P:N]=2$.  Then, $\cos(\beta(P,Q))=
  \displaystyle \frac{P_2(m/2)}{\sqrt{P_2(\delta^2/2)P_3(m/2)}}$,
where $m = [M:Q] \in\N$ and, as usual $\delta:=\sqrt{[M:N]}.$}\smallskip

\noindent As a `geometric' consequence, in \Cref{valuesofbeta2}, we
see that if both $N \subset P$ and $N \subset Q$ have index $2$, then
the exterior angle $\beta(P,Q) > \pi/3$.\smallskip

Finally, while analyzing a quadrilateral intuitively as a picture in the plane
(\Cref{fig-2}), loosely speaking, we realize in \Cref{angle-bounds}
that the angles impose some sort of rigidity on the lengths of its
sides. This could be inferred as a direct consequence of certain bounds on
interior and exterior angles that we obtain in:\smallskip

\noindent{\bf \Cref{inequality}}
{\em Let $(N,P,Q,M)$ be a finite index irreducible quadruple such that
 $N\subset P$ is regular. Then,
 $$\cos\big(\alpha(P,Q)\big)\leq
  \Bigg(\sqrt{\frac{[P:N]-1}{[Q:N]-1}}\Bigg)$$ and
  $$\cos(\beta(P,Q))\leq \Bigg(\sqrt{\frac{[P:N]-r}{[Q:N]-r}}\Bigg).$$
}\smallskip

The flow of the article revolves around the results mentioned above,
more or less in the same order.
%================================================================================

\section{Interior and Exterior angles between intermediate subfactors}\label{angle-generalities}\label{angles}
In this section, we first recall the notions of interior and exterior
angles between intermediate subfactors of a given subfactor as
introducted by Bakshi {\em et al.} in \cite{BDLR2017} and some useful
formulae related to them. This will be followed by some further
generalities and explicit calculations related to these angles.

In this article, we will be dealing only with subfactors and quadruples of type
$II_1$ with finite Jones' index.  Given any such quadruple $$\begin{matrix}
  Q &\subset & M \cr \cup &\ &\cup\cr N &\subset & P,
\end{matrix}$$  consider the basic
 constructions $N\subset M \subset M_1$, $P \subset M \subset P_1$ and
 $Q \subset M \subset Q_1$. As is standard, we denote by $e_1$ the
 Jones projection $e^M_N$. It is easily seen that, as $II_1$-factors
 acting on $L^2(M)$, both $ P_1$ and $Q_1$ are contained in $ M_1$. In
 particular, if $e_P: L^2(M) \rar L^2(P)$ denotes the orthogonal
 projection, then $e_P \in M_1$. Likewise, $e_Q \in M_1$. Thus, we
 naturally obtain a dual quadruple $$\begin{matrix} P_1 &\subset & M_1
   \cr \cup &\ &\cup\cr M &\subset & Q_1.
\end{matrix}$$

\subsection{Some useful formulae related to interior and exterior angles.}

We first list some plausible facts from \cite{BDLR2017} that make
computations of the interior and exterior angles more amenable.

\begin{definition}\cite{BDLR2017}\label{alpha-angle}\label{beta-angle}
Let $P$ and $Q$ be two intermediate subfactors of a subfactor $N
\subset M$.  Then, the interior angle $\alpha^N_M(P,Q)$ between $P$
and $Q$ is given by
\begin{equation*}
 \alpha^N_M(P, Q) =  \cos^{-1} {\langle v_P,v_Q\rangle}_2,
 \end{equation*}
 where $v_P := \frac{e_P-e_1}{{\lVert e_P-e_1\rVert}_2}$, ${\langle x,
   y\rangle}_2 := \tr(y^*x)$ and ${\lVert x\rVert}_2 :=
 (\tr(x^*x))^{1/2}$.  And, the exterior angle between $P$ and $Q$ is
 given by $\beta^N_M(P, Q) = \alpha^M_{M_1}(P_1, Q_1)$.
\end{definition}

We will avoid being pedantic and often drop the superscript $N$ and
the subscript $M$ when the subfactor $N \subset M$ is clear from the
context. Recall that a (right) Pimsner-Popa basis for a subfactor $N
\subset M$ is a finite collection $\{\lambda_i : i \in I\}$ in $M$
satisfying $\lambda_i e_1 \lambda_i^* = 1$ or, equivalently, $x =
\sum_i E_N(x \lambda_i)\lambda_i^*$ for all $x \in M$ - see
\cite{PiPo, JS} for details.

\begin{theorem}\cite{BDLR2017}\label{Thm: alpha-beta}
For a quadruple $(N,P,Q,M)$, let $\tau_P=\tr(e_P)$,
$\tau_Q=\tr(e_Q)$  and $\tau = \tr(e_1)$. Then, the interior angle $\alpha (P,Q)$ satisfies
\begin{align}\label{Equ:alpha}
\cos \big(\alpha(P, Q)\big)
&=\frac{\tr(e_Pe_Q)-\tau}{\sqrt{\tau_P-\tau}\sqrt{\tau_Q-\tau}},
\end{align}
which, then, yields that
\begin{equation}\label{alpha-eqn-basis}
 \cos \big(\alpha (P, Q) \big) = \frac{\sum_{i,j} \tr_M\big( E^M_N
   (\lambda_i^* \mu_j) \mu_j^* \lambda_i\big) -1}{\sqrt{[P:N]
     -1}\sqrt{[Q:N] -1}}
\end{equation}
for any two Pimsner-Popa bases $\{\lambda_i\}$ and $\{\mu_j\}$ of $P/N$ and
$Q/N$, respectively. And, if the quadruple is extremal, i.e., $N
\subset M$ is extremal, then the exterior angle $\beta(P,Q)$ satisfies
\begin{align}\label{Equ:beta}
\cos\big( \beta(P, Q)\big) &=
\frac{\tr(e_Pe_Q)-\tau_P\tau_Q}{\sqrt{\tau_P-\tau_P^2}\sqrt{\tau_Q-\tau_Q^2}}.
\end{align}
\end{theorem}

The following useful expression for $\tr(e_P e_Q)$ is quite evident
from \Cref{Equ:alpha} and \Cref{alpha-eqn-basis}; the details can be
readily extracted from the proof of \cite[Proposition 2.14]{BDLR2017}.

\begin{lemma}\label{tr-epeq}
          Let $N \subset M$ be a subfactor  and $P$ and $Q$ be two intermediate
          subfactors. Then,
          \[
\tr_{M_1}(e_P e_Q) = \tau \sum_{i,j}\|E_N(\lambda_i^* \mu_j)\|_2^2
\]
for any two Pimsner-Popa bases $\{\lambda_i\}$ and $\{\mu_j\}$ of
$P/N$ and $Q/N$, respectively, where $\tau := [M:N]^{-1}$.
\end{lemma}

The following useful relationship between $\alpha(P,Q)$ and
$\beta(P,Q)$ was mentioned in \cite{BDLR2017}, following Definition
3.6, without any proof. For the sake of completeness, we include a
proof using the planar algebraic technique of Jones (though, only for the
extremal case, which will be enough for our requirements).
\begin{lemma}\label{angle-duality}
  For an extremal subfactor $N \subset M$ with intermediate subfactors
  $P$ and $Q$, we have
   \[
 \alpha^N_M (P, Q) =  \beta^M_{M_1} (P_1, Q_1).
\]
  \end{lemma}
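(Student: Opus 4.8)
The plan is to collapse the whole statement into a single trace identity and then feed that to the periodicity of the Jones tower.

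First I would record a closed form for the interior angle. Since $N\subset P$ and $N\subset Q$ give $L^2(N)\subset L^2(P),L^2(Q)$, we have $e_1\leq e_P$ and $e_1\leq e_Q$, so $e_Pe_1=e_1=e_Qe_1$ and hence
\[
\langle e_P-e_1,\,e_Q-e_1\rangle_2=\mathrm{tr}(e_Pe_Q)-\mathrm{tr}(e_1),\qquad
\|e_P-e_1\|_2^2=\mathrm{tr}(e_P)-\mathrm{tr}(e_1).
\]
Inserting the Markov-trace values $\mathrm{tr}(e_1)=[M:N]^{-1}$, $\mathrm{tr}(e_P)=[M:P]^{-1}$, $\mathrm{tr}(e_Q)=[M:Q]^{-1}$ and clearing denominators yields
\[
\cos\big(\alpha^N_M(P,Q)\big)=\frac{[M:N]\,\mathrm{tr}_{M_1}(e_Pe_Q)-1}{\sqrt{[P:N]-1}\,\sqrt{[Q:N]-1}}.\tag{A}
\]
The derivation of $(A)$ uses nothing special about $N\subset M$, so it applies verbatim at every level of the tower.

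Next I apply $(A)$ on both sides of the asserted equality. By definition $\beta^M_{M_1}(P_1,Q_1)=\alpha^{M_1}_{M_2}\big((P_1)_1,(Q_1)_1\big)$, the interior angle of the twice-dualized quadruple $(M_1,(P_1)_1,(Q_1)_1,M_2)$, whose relevant Jones projections $e^{M_2}_{(P_1)_1},\,e^{M_2}_{(Q_1)_1}$ and $e_3=e^{M_2}_{M_1}$ live in $M_3$. Because the basic construction preserves index, $[(P_1)_1:M_1]=[M_1:P_1]=[M:N]/[M:P]=[P:N]$, likewise $[(Q_1)_1:M_1]=[Q:N]$, while $[M_2:M_1]=[M:N]$. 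Hence the denominator that $(A)$ produces for $\beta^M_{M_1}(P_1,Q_1)$ is exactly $\sqrt{[P:N]-1}\,\sqrt{[Q:N]-1}$, the same as for $\alpha^N_M(P,Q)$, and the numerators have the same shape. Thus the lemma is \emph{equivalent} to the single trace identity
\[
\mathrm{tr}_{M_1}(e_Pe_Q)=\mathrm{tr}_{M_3}\big(e^{M_2}_{(P_1)_1}\,e^{M_2}_{(Q_1)_1}\big).\tag{$\star$}
\]

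To establish $(\star)$ I would use the period-two canonical shift of the tower. Note that $e_1,e_P,e_Q\in N'\cap M_1$ while $e_3,e^{M_2}_{(P_1)_1},e^{M_2}_{(Q_1)_1}\in M_1'\cap M_3$, and that the matching projections carry equal traces, e.g.\ $\mathrm{tr}(e_P)=[M:P]^{-1}=\mathrm{tr}(e^{M_2}_{(P_1)_1})$. The shift $\Phi\colon N'\cap M_\bullet\to M_1'\cap M_{\bullet+2}$ is a normal $*$-homomorphism with $\Phi(e_j)=e_{j+2}$; writing $e_P=\sum_i\lambda_ie_1\lambda_i^*$ for a Pimsner--Popa basis $\{\lambda_i\}\subset P$ of $P/N$ and transporting the basis one should identify $\Phi(e_P)=e^{M_2}_{(P_1)_1}$, and similarly for $Q$, whence $(\star)$ follows from trace-preservation. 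The hard part is precisely this last point: $\Phi$ is always multiplicative, but it preserves the trace \emph{only} because $N\subset M$ is extremal --- this is the sole place extremality enters, and for a non-extremal inclusion the two traces on the relative commutant disagree and $(\star)$ can break down. If I wanted a self-contained argument avoiding the shift, I would instead expand both sides of $(\star)$ over Pimsner--Popa bases of $P/N,\,Q/N$ (respectively $(P_1)_1/M_1,\,(Q_1)_1/M_1$), reducing each to a sum of structure constants of the form $\mathrm{tr}\big(\rho_j^*\lambda_i\,E_N(\lambda_i^*\rho_j)\big)$, and then check that extremality is exactly what makes the two sums coincide; verifying that matching is the delicate computation.
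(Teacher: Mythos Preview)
Your reduction to the trace identity $(\star)$ is exactly the paper's own reduction: the paper writes it as $\mathrm{tr}(e_Pe_Q)=\mathrm{tr}(e_{P_2}e_{Q_2})$ (together with $\mathrm{tr}(e_P)=\mathrm{tr}(e_{P_2})$ and $\mathrm{tr}(e_Q)=\mathrm{tr}(e_{Q_2})$, which you recover from the index equalities). For $(\star)$ itself the paper invokes \cite[Lemma~4.2]{BL}, which supplies a planar-algebra diagram for $e_{P_2}$ in terms of $e_P$ and then reads all three trace equalities off pictorially; your canonical-shift route is the algebraic version of the same move---the planar shift-by-two \emph{is} Ocneanu's canonical shift on the standard invariant---so the two arguments coincide in content, only the language differs.

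One point deserves tightening. The identification $\Phi(e_P)=e_{P_2}$ does \emph{not} follow by ``transporting the basis'' from $e_P=\sum_i\lambda_ie_1\lambda_i^*$: the $\lambda_i$ lie in $P$, not in the domain $N'\cap M_1$ of $\Phi$, so you cannot push $\Phi$ through the sum termwise. The honest justification is that the two-step tower isomorphism $(N\subset M\subset M_1)\cong(M_1\subset M_2\subset M_3)$ carries the intermediate $P$ to $P_2$---and that is precisely the content of \cite[Lemma~4.2]{BL}, so you end up appealing to the same lemma the paper does, just under another name. Your remark that extremality enters solely through trace-preservation of the shift is correct and corresponds to the paper's implicit reliance on the spherical normalization of the planar algebra in \cite{BL}.
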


\begin{proof}
  We have a tower
  \[
N \subset P \subset M  \subset P_1 \subset M_1  \subset P_2 \subset M_2  
\]
where $P_i \subset M_i \subset P_{i+1} = \la M_i, e_{P_i} \ra$ is a
basic construction with Jones projection $e_{P_i} = e_{0, i+1}:
L^2(M_i) \rar L^2(P_i)$, and $P_0:=P$ - see \cite[$\S$
  3]{BL}. Likewise, we have another tower
  \[
N \subset Q \subset M  \subset Q_1 = \la M, e_{Q}\ra \subset M_1  \subset Q_2 = \la M_1, {e_{Q_1}}\ra \subset M_2 . 
\]
From \cite[Lemma 4.2]{BL}, we have \psfrag{d}{$e_P$} $ e_{P_2} =
\vcenter{\psfrag{e}{$e_P$}\includegraphics[scale=0.4]{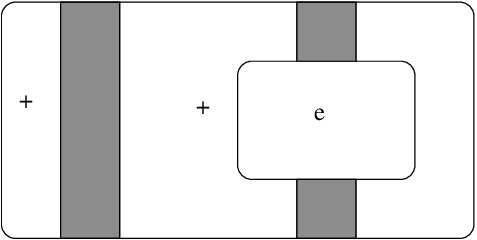}.}$ We
have a similar figure for $e_{Q_2}$ with respect to $e_Q$. From this
pictorial description, it is readily seen through pictures that
\[
\tr(e_P
e_Q) = \tr (e_{P_2} e_{Q_2}), \tr(e_P) = \tr(e_{P_2})\ \text{and}\ \tr (e_Q) =
\tr (e_{Q_2}).
\]
From \Cref{Equ:alpha}, we have
$
\cos \big(\alpha^N_M (P, Q) \big) = \frac{\tr(e_P e_Q) - \tau}{\sqrt{\tau_P - \tau} \sqrt{\tau_Q - \tau}}
$
and, similarly,
$
\cos \big(\alpha^{M_1}_{M_2} (P_2, Q_2) \big) = \frac{\tr(e_{P_2} e_{Q_2}) - \tau}{\sqrt{\tau_{P_2} - \tau} \sqrt{\tau_{Q_2} - \tau}}.
$
Finally, employing the above equalities obtained through pictures, we obtain
\[
\cos \big( \beta^M_{M_1}(P_1, Q_1)\big) = \cos \big(\alpha^{M_1}_{M_2}
(P_2, Q_2) \big) = \cos \big(\alpha^N_M (P, Q) \big),
\]
as was desired.  \end{proof}

\begin{figure}[h]
  \psfrag{N}{$N$} \psfrag{P}{$P$} \psfrag{M}{$M$}\psfrag{Q}{$Q$}
  \psfrag{P1}{$P_1$}\psfrag{Q1}{$Q_1$}\psfrag{M1}{$M_1$}
\psfrag{a}{$\alpha$}\psfrag{b}{$\beta$}
  \includegraphics[scale=0.3]{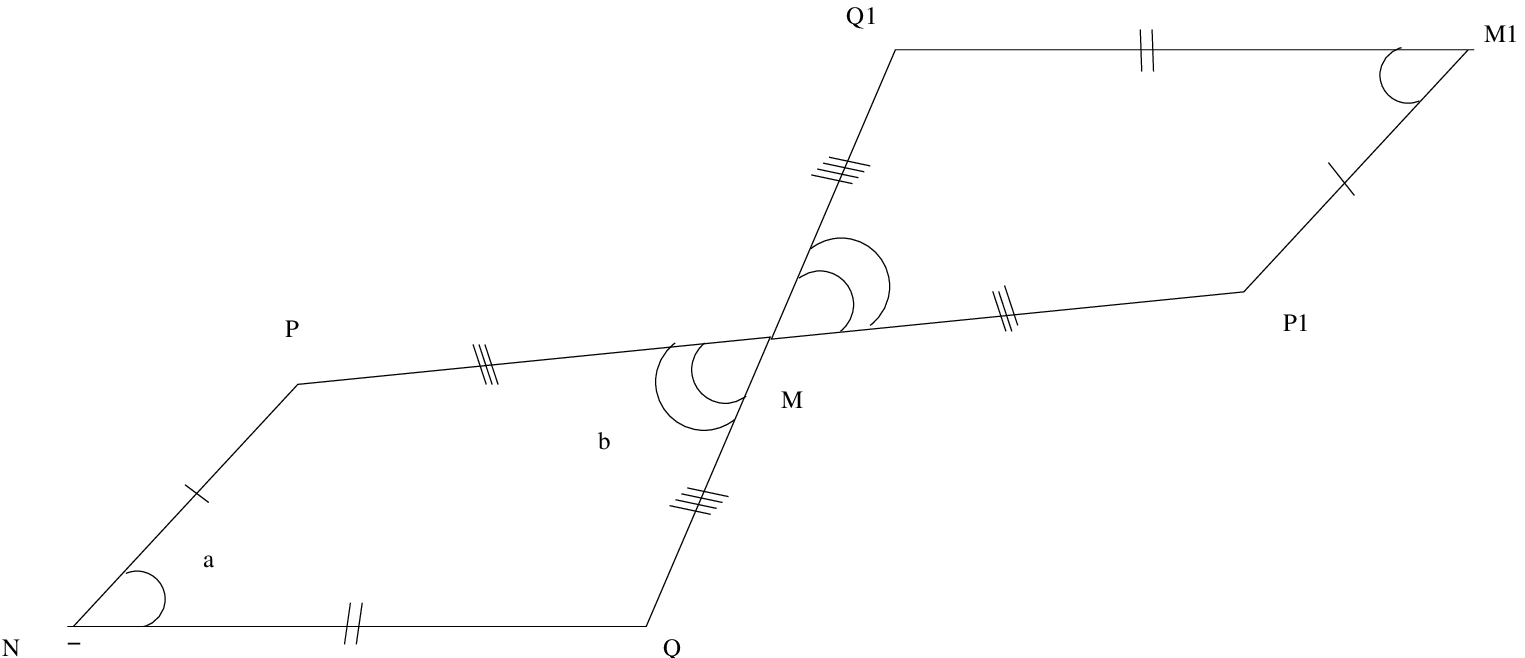}
\caption{Intuitive picture of a quadrilateral along with its dual}\label{fig-2}
\end{figure}

Recall that two subfactors $N \subset M$ and $
\mathcal{N}\subset\mathcal{M}$ are said to be isomorphic (denoted as
$(N\subset M)\cong (\mathcal{N}\subset \mathcal{M})$) if there exists
a $*$-isomorphism $\varphi$ from $M$ onto $\mathcal{M}$ such that
$\varphi(N)=\mathcal{N}.$ Likewise, two quadruples $(N, P, Q, M)$ and
$(\mathcal{N}, \mathcal{P}, \mathcal{Q}, \mathcal{M}) $ are said to be
isomorphic if there is an isomorphism $\varphi$ between the subfactors
$N \subset M$ and $ \mathcal{N}\subset\mathcal{M}$ such that $\varphi
(P) = \mathcal{P}$ and $\varphi (Q) = \mathcal{Q}$.

\begin{remark}\label{isomorphism}
Since Pimsner-Popa bases are preserved by isomorphisms of subfactors,
in view of \Cref{Thm: alpha-beta} and \Cref{tr-epeq}, we observe that an isomorphism
between two quadruples preserves interior and exterior angles.
\end{remark}

Recall that a quadruple $(N, P, Q, M)$ is said to be a commuting
square if $e_P e_Q = e_1 = e_Q e_P$. It is said to be a
cocommuting square if the dual quadruple $(M, P_1, Q_1, M)$ is a commuting
square. It is said to be non-degenerate (resp., irreducible) if
$\overline{\text{span}PQ} = M$ (resp., $N'\cap M= \C$). Further, it is
said to be a parallelogram if $\tau_P \tau_Q = \tau$ or, equivalently,
if $[M: P ] = [Q:N]$ or $[M:Q] = [P:N]$. And, a quadruple $(N,P,Q,M)$
is said to be a quadrilateral if $P\vee Q=M$ and $P\wedge Q=N.$

\begin{remark}\label{commuting-cocommuting}
  Commuting and cocommuting conditions have  very natural
interpretations in terms of above angles, viz., a quadruple $(N, P, Q,
M)$ is a commuting (resp., co-commuting) square if and only if
$\alpha(P,Q)$ (resp., $\beta(P,Q)$) equals $\pi/2$ - see \cite[$\S
  2$]{BDLR2017}.
\end{remark}

\subsection{Computation of angles for quadruples of crossed product algebras}\label{examples}

\begin{proposition}\label{ex-1}
Let $G$ be a finite group acting outerly on a $II_1$-factor $S$. Let
$H, K $ and $L$ be subgroups of $G$ such that $L \subseteq H \cap K$
and $H$ and $K$ are non-trivial.  Consider the quadruple $(N = S
\rtimes L, P = S \rtimes H, Q = S \rtimes K, M= S \rtimes G )$. Then,
\begin{equation}
  \tr(e_P e_Q)  = \frac{ |H \cap K|}{|G|},
  \end{equation}
\begin{equation}
  \cos\big( \alpha(P, Q)\big) =  \frac{|H \cap K| -|L|}{\sqrt{|H|-|L|} \sqrt{|K|-|L|}}
  \end{equation}
and
\begin{equation}
  \cos\big( \beta(P, Q)\big) =  \frac{\frac{|G|}{|HK|} - 1}{\sqrt{ [G:H]- 1} \sqrt{[G:K] -1}}.
\end{equation}
In particular, as is well known, $(N,P,Q,M)$ is a commuting  (resp., cocommuting) square if
and only if $H \cap K = L$ (resp., $G = HK$).
\end{proposition}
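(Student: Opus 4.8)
The plan is to produce explicit Pimsner--Popa bases for $N\subset P$ and $N\subset Q$ from coset representatives and then feed them into \Cref{tr-epeq} and the formulas of \Cref{Thm: alpha-beta}. Write $u_g$ for the canonical unitaries implementing the $G$-action, so that $u_g s u_g^* = \sigma_g(s)$, and let $E_N\colon P\to N$ be the trace-preserving conditional expectation, which sends $\sum_{k\in K}s_k u_k$ to $\sum_{h\in H}s_h u_h$. Since $S$ is a factor carrying an outer action, $S'\cap M=\C$, and as $S\subseteq N$ we get $N'\cap M\subseteq S'\cap M=\C$; hence $N\subset M$ is irreducible and therefore extremal, which justifies the use of \Cref{Equ:beta}. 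Fixing left-coset representatives $\{k_i\}$ of $K/H$ and $\{l_j\}$ of $L/H$, a short computation with $u_g s u_g^*=\sigma_g(s)$ shows that $\lambda_i:=u_{k_i}$ and $\mu_j:=u_{l_j}$ obey the reconstruction identity $x=\sum_i\lambda_i E_N(\lambda_i^*x)$ together with $E_N(\lambda_i^*\lambda_{i'})=\delta_{ii'}1$, so they are Pimsner--Popa bases of $P/N$ and $Q/N$. Recording the relevant traces, $\tau=[M:N]^{-1}=|H|/|G|$, $\tau_P=tr(e_P)=[M:P]^{-1}=|K|/|G|$ and $\tau_Q=|L|/|G|$.

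The computational core is the evaluation of $tr(e_Pe_Q)$ through \Cref{tr-epeq}. Here $E_N(\lambda_i^*\mu_j)=E_N(u_{k_i^{-1}l_j})$ equals $u_{k_i^{-1}l_j}$ when $k_i^{-1}l_j\in H$ and vanishes otherwise, so each norm $\|E_N(\lambda_i^*\mu_j)\|_2^2$ is $0$ or $1$ and the whole sum reduces to the count
\[
\#\{(i,j):k_i^{-1}l_j\in H\}=[K\cap L:H]=\frac{|K\cap L|}{|H|}.
\]
This identity is the main obstacle and the only genuinely combinatorial point: the condition $k_i^{-1}l_j\in H$ is equivalent to $k_iH=l_jH$, forcing this common left $H$-coset to lie inside $K\cap L$; conversely, since $H\subseteq K\cap L$, the set $K\cap L$ is a disjoint union of exactly $[K\cap L:H]$ left $H$-cosets, and each of them is realized by a unique index $i$ and a unique index $j$. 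Multiplying by $\tau=|H|/|G|$ then gives $tr(e_Pe_Q)=|K\cap L|/|G|$, the first claimed identity.

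It then remains to substitute into the angle formulas and simplify. For the interior angle I would insert $tr(e_Pe_Q)-\tau=(|K\cap L|-|H|)/|G|$ together with $\tau_P-\tau=(|K|-|H|)/|G|$ and $\tau_Q-\tau=(|L|-|H|)/|G|$ into \Cref{Equ:alpha}. For the exterior angle I would compute $tr(e_Pe_Q)-\tau_P\tau_Q=|K\cap L|\,(|G|-|KL|)/|G|^2$ and $\tau_P-\tau_P^2=|K|\,(|G|-|K|)/|G|^2$ (with the analogous term for $Q$), feed these into \Cref{Equ:beta}, and finally invoke the group-theoretic identity $|KL|=|K||L|/|K\cap L|$ to repackage the resulting quotient into the stated form involving $|G|/|KL|$ and the indices $[G:K],[G:L]$; this algebraic repackaging is the one place where some care with the square roots is needed.

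The final assertion is then immediate from \Cref{commuting-cocommuting}. Indeed, $(N,P,Q,M)$ is a commuting square if and only if $\alpha(P,Q)=\pi/2$, if and only if the numerator $|K\cap L|-|H|$ vanishes, i.e.\ $K\cap L=H$ (using $H\subseteq K\cap L$); similarly it is a cocommuting square if and only if $\beta(P,Q)=\pi/2$, if and only if $|G|-|KL|=0$, i.e.\ $G=KL$.
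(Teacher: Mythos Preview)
Your proof is correct and follows essentially the same route as the paper: coset representatives $\{u_{k_i}\},\{u_{l_j}\}$ as Pimsner--Popa bases, \Cref{tr-epeq} reducing $tr(e_Pe_Q)$ to the count $|\{(i,j):k_iH=l_jH\}|=[K\cap L:H]$ via the same bijection with $(K\cap L)/H$, then substitution into \Cref{Equ:alpha} and \Cref{Equ:beta}, with \Cref{commuting-cocommuting} giving the final clause. One remark: your (correct) values $\tau_P-\tau=(|K|-|H|)/|G|$ and $\tau_Q-\tau=(|L|-|H|)/|G|$ actually yield
\[
\cos\alpha(P,Q)=\frac{|K\cap L|-|H|}{\sqrt{|K|-|H|}\sqrt{|L|-|H|}},
\]
so the $\sqrt{|K|-1}\sqrt{|L|-1}$ in the displayed statement is a typo (the paper's own derivation makes the same substitution you do); this is harmless downstream since \Cref{ex-1} is only ever invoked with $H$ trivial.
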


\begin{proof}
 Note that if $\alpha : G \rar \text{Aut}(S)$ denotes the action of
 $G$ on $S$, then there is a unitary representation $G \ni t \mapsto
 u_t \in B(L^2(S))$, such that $u_t(x\Omega) = \alpha_t(x) \Omega$ for
 all $x \in S$ - see \cite[$\S$ A.4]{JS}.

Fix left coset representatives $\{h_i : 1 \leq i \leq [H : L]\}$ and $
\{k_j : 1 \leq j \leq [K : L]\}$ of $L$ in $H$ and $K$,
respectively. Since $E_N(\sum_g x_g u_g) = \sum_l x_l u_l$, it follows
that $ \{h_i: 1 \leq i \leq [H:L]\}$ and $\{k_j : 1 \leq j \leq
[K:L]\}$ are (right) orthonormal bases for $P/N$ and $Q/N$,
respectively. So, by \Cref{tr-epeq}, we obtain
\begin{eqnarray*}
  \tr(e_P e_Q) & = & [G:L]^{-1}  \sum_{i,j}\tr_{M} \Big(
  E_N\big( u_{h_i}^{-1} u_{k_j}\big)u_{k_j}^{-1} u_{h_i} \Big) \\
  & = & [G:L]^{-1} \, \sum_{ \{ i,j : h_i^{-1}k_j \in L
    \}}\tr_{M} \Big( u_{h_i}^{-1} u_{k_j} u_{k_j}^{-1} u_{h_i} \Big)\quad \big(\text{since }
  E_N\Big(\sum_{g\in G}x_g u_g\Big) = \sum_{l\in L} x_l u_l \big)\\
  & = &[G:L]^{-1} |\{(i, j) : k_j \in h_iL\}|\\
  & = &[G:L]^{-1}\,|\{ (i, j) : h_iL \cap k_jL \neq
  \emptyset \}|;
\end{eqnarray*}
and note that the map \[
\{(i, j) : h_i L \cap k_j L \neq \emptyset\} \ni (i, j) \mapsto h_i L = k_j L \in (H \cap K)/L
\]
is a natural bijection; so that, $ \tr(e_P e_Q) = \frac{ |H \cap
  K|}{|G|}$.  Then, from \Cref{Equ:alpha}, we immediately obtain
\[
  \cos\big( \alpha(P, Q)\big)  =  \frac{|H \cap K| -|L|}{\sqrt{|H|-|L|} \sqrt{|K|-|L|}}
\]
and, from \Cref{Equ:beta}, through an elementary simplification, we deduce that
\[
\cos\big( \beta(P, Q)\big) =  \frac{\frac{|G|}{|HK|} - 1}{\sqrt{ [G:H]- 1} \sqrt{[G:K] -1}}.
\]
The commuting and cocommuting conditions follow from \Cref{commuting-cocommuting}.
\end{proof}

\begin{corollary}\label{fixed-angle}
Let $H, K, G$ and $S$ be as in \Cref{ex-1}.
Consider the quadruple $(N= S^G, P = S^H, Q = S^K, M = S)$. Then,
  \[
  \cos (\alpha (P, Q)) = \frac{ \frac{| G|}{|HK|}-1}{\sqrt{{[G:H]}-1}\sqrt{{[G:K]}-1}} 
  \]
  and
  \[
  \cos (\beta(P,Q)) = \frac{|H \cap K| - 1}{\sqrt{|H| -1}\sqrt{|K| -1}}.
\]
  In particular,  $(N, P , Q , M )$ is a
  commuting square if and only if $HK = G$. And, it is a cocommuting
  square if and only if $H \cap K = \{ e\}$.
  \end{corollary}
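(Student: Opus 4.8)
The plan is to deduce both angle formulae from \Cref{ex-1} via the duality between fixed-point and crossed-product subfactors, so that essentially no new computation is needed. The one structural input I would use is the standard fact that, for the outer action of the finite group $G$ on $S$, the fixed-point subfactor $S^G\subset S$ has the crossed product as its basic construction: the conditional expectation onto $S^G$ is the average $x\mapsto \frac{1}{|G|}\sum_{g\in G}\alpha_g(x)$, the corresponding Jones projection $e_{S^G}$ is implemented by $\frac{1}{|G|}\sum_{g\in G}u_g$, and $\langle S,e_{S^G}\rangle\cong S\rtimes G$ with $S$ embedded identically (see \cite{JS}). Since the action is outer, $S^G\subset S$ is irreducible, hence extremal, so \Cref{angle-duality} will be available.

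First I would identify the dual quadruple of $(N,P,Q,M)=(S^G,S^K,S^L,S)$. By definition it is $(M,P_1,Q_1,M_1)$ with $M_1=\langle S,e_{S^G}\rangle$, $P_1=\langle S,e_{S^K}\rangle$ and $Q_1=\langle S,e_{S^L}\rangle$. The same averaging description gives that $e_{S^K}$ is implemented by $\frac{1}{|K|}\sum_{k\in K}u_k\in S\rtimes K$; since $[\langle S,e_{S^K}\rangle:S]=[S:S^K]=|K|=[S\rtimes K:S]$ and $\langle S,e_{S^K}\rangle\subseteq S\rtimes K\subseteq S\rtimes G$, the two coincide, so $P_1=S\rtimes K$, and likewise $Q_1=S\rtimes L$ and $M_1=S\rtimes G$. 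Thus the dual of the fixed-point quadruple is exactly the crossed-product quadruple $(S,S\rtimes K,S\rtimes L,S\rtimes G)$ of \Cref{ex-1} with the trivial choice $H=\{e\}$, which is legitimate as $\{e\}\subseteq K\cap L$.

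With this identification both formulae fall out of \Cref{ex-1}. By \Cref{beta-angle}, $\beta(P,Q)=\alpha^M_{M_1}(P_1,Q_1)$ is the interior angle of the crossed-product quadruple, and \Cref{ex-1} with $H=\{e\}$ gives $\cos(\alpha(P_1,Q_1))=\frac{|K\cap L|-1}{\sqrt{|K|-1}\sqrt{|L|-1}}$, the stated value of $\cos(\beta(P,Q))$. For the interior angle I would apply \Cref{angle-duality}, which (by extremality) gives $\alpha(P,Q)=\beta^M_{M_1}(P_1,Q_1)$, the exterior angle of the crossed-product quadruple; \Cref{ex-1} with $H=\{e\}$ gives $\cos(\beta(P_1,Q_1))=\frac{\frac{|G|}{|KL|}-1}{\sqrt{[G:K]-1}\sqrt{[G:L]-1}}$, the claimed value of $\cos(\alpha(P,Q))$. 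The commuting/cocommuting dichotomy then follows from \Cref{commuting-cocommuting}: $\alpha(P,Q)=\pi/2$ forces the numerator $\frac{|G|}{|KL|}-1$ to vanish, i.e.\ $KL=G$, while $\beta(P,Q)=\pi/2$ forces $|K\cap L|=1$, i.e.\ $K\cap L=\{e\}$.

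The step I expect to be the only real obstacle is the middle one: verifying that the basic constructions of the intermediate fixed-point subfactors embed in $M_1=S\rtimes G$ exactly as $S\rtimes K$ and $S\rtimes L$, i.e.\ that the basic-construction/Galois correspondence is compatible with passage to intermediate subfactors. Once that is in place the angle computations are immediate, since they merely transcribe \Cref{ex-1}.
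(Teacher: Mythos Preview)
Your proposal is correct and follows essentially the same route as the paper: identify the dual quadruple as the crossed-product quadruple $(S, S\rtimes K, S\rtimes L, S\rtimes G)$, then read off $\beta(P,Q)$ from the interior angle formula of \Cref{ex-1} (via the definition of $\beta$) and $\alpha(P,Q)$ from the exterior angle formula of \Cref{ex-1} (via \Cref{angle-duality}), taking $H=\{e\}$. In fact you supply more detail than the paper does on the identification $P_1=S\rtimes K$, $Q_1=S\rtimes L$, $M_1=S\rtimes G$, which the paper simply asserts.
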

\begin{proof}
Since $S^G \subset S$ is extremal, by \Cref{angle-duality},   we have
   \[
 \alpha^N_M (P, Q) =  \beta^M_{M_1} (P_1, Q_1).
\]
Outhere, we have $M = S, P_1 = S \rtimes H$, $Q_1= S \rtimes K$ and
$M_1 = S \rtimes G$; so, by \Cref{ex-1} (taking $L$ to be the trivial
subgroup), we obtain
\[
 \cos \big(\alpha^N_M (S^H, S^K)\big) = \cos \big(\beta^M_{M_1} (S
 \rtimes H, S \rtimes K)\big) = \frac{ \frac{|
     G|}{|HK|}-1}{\sqrt{{[G:H]}-1}\sqrt{{[G:K]}-1}} .
 \]
 On the other hand, by definition, we have $\beta^N_M(P,Q) =
 \alpha^M_{M_1}(P_1, Q_1)$. Hence, by \Cref{ex-1}, we obtain
 \[
  \cos (\beta(P,Q)) = \frac{|H \cap K| - 1}{\sqrt{|H| -1}\sqrt{|K| -1}}.
 \]
 \end{proof}

It was shown in \cite[$\S$ 5]{BDLR2017} that the notion of
Sano-Watatani's set of angles does not agree with the notion of
interior angle.  Using \Cref{fixed-angle}, we add to that list and show
 that the Sano-Watatani's set of angles and the interior angle may not
be equal even if the former is a singleton.

\begin{example}
  Consider the quadruple $(N= R^G, P = R^H, Q = R^K, M = R)$ with the
  assumption that $H \cap K =\{e\}$, $|H\backslash G/H| = 2$, $H$ and
  $K$ are both non-trivial subgroups. Then, the  Sano-Watatani's set of angles $
  \mathrm{Ang}_M(P, Q)$ is a singleton and $ \{\alpha (P, Q)\} \neq
  \mathrm{Ang}_M(P, Q).$
  \end{example}

\begin{proof}
From \cite[Lemma 5.3 and Proposition 5.2]{SW}, we have
$\mathrm{Ang}_M(P, Q)$ is a singleton, namely,
\[
\mathrm{Ang}_M(P, Q) = \left\{\cos^{-1}
\left(\frac{|G| - |H||K|}{|K|(|G| -|H|)}\right)^{1/2}\right\}.
\]
And, by \Cref{fixed-angle}, we have
\[
\cos\big(\alpha(P, Q)\big) = \frac{ \frac{|
    G|}{|HK|}-1}{\sqrt{{[G:H]}-1}\sqrt{{[G:K]}-1}} = \frac{ \frac{|
    G|}{|H||K|}-1}{\sqrt{|G|/|H|-1}\sqrt{|G| |K|-1}},
\]
where the second equality follows because $H \cap K = \{e\}$ gives
$|HK| =|H||K|$.  Thus,
\(
\{\alpha (P, Q)\} =
\mathrm{Ang}_M(P, Q)
\)
if and only if
\begin{equation}\label{equality}
\left(\frac{|G| - |H||K|}{|K|(|G| -|H|)}\right)^{1/2} = \frac{ \frac{|
    G|}{|H||K|}-1}{\sqrt{|G|/|H|-1}\sqrt{|G| |K|-1}}.
\end{equation}
Note that $RHS = \left(\frac{|G| - |H||K|}{(|G| -|H|)}\right)^{1/2}
\cdot \left(\frac{|G| - |H||K|}{(|G| -|K|)
  |H||K|}\right)^{1/2}$. Hence (\ref{equality}) is true if and only if
\[
\frac{1}{|K|} = \frac{|G| - |H||K|}{(|G| -|K|)
  |H||K|},
\]
which is then true if and only if 
$|H| = 1$, which is not true since $H$ is not the trivial subgroup.
  \end{proof}

We conclude this subsection by deducing the following  well known fact.
\begin{example} 
Let $N \subset M$ be a subfactor and $G$ be a finite group acting
outerly (through $\alpha$) on $M$ . Then, $(N, N\rtimes G,M, M\rtimes G)$ is a commuting
square.
\end{example}

\begin{proof}
 Note that $E^{M\rtimes G}_N\big(\sum_g a_gu_g\big)= E^M_N(a_e).$
 Indeed, for any $b\in N$, we have
 \[
 \tr\Big(\big(\sum a_gu_g\big)b\Big)= \tr\Big(\sum a_g{\alpha}_g(b)u_g\Big)=\tr(a_eb),
 \] and, on
 the other hand, $\tr\big(E^M_N(a_e)b\big)=\tr\big(E^M_N(a_eb)\big)=\tr(a_eb).$

Let $\{\lambda_i\}$ be a (right) basis for $M/N$. Then, from
\Cref{Equ:alpha}, we obtain
 \[
 \cos\big(\alpha(N \rtimes G, M)\big)= \frac{\sum_{i,g}
   \tr\big(E_N({\lambda}^*_iu_g) u_g^*
   {\lambda}_i\big)-1}{\sqrt{[M:N]-1}\sqrt{|G|-1}}= \frac{\tr\big(\sum_i
   E_N({\lambda}^*_i){\lambda}_i\big)-1}{\sqrt{[M:N]-1}\sqrt{|G|-1}} = 0,
 \]
because $\sum_i \lambda_i E_N(\lambda_i^*) = 1$. Thus, $\alpha=
\pi/2$, and hence, by \Cref{commuting-cocommuting}, $(N, N \rtimes G,
M, M \rtimes G)$ is a commuting square.
\end{proof}

\section{Weyl group, Quadrilaterals and regularity}\label{quadrilaterals-regularity}

In this section we  focus on the analysis of irreducible
subfactors and quadrilaterals from the perspectives of Weyl group and
interior and exterior angles between intermediate subfactors.

 Let $N \subset M$ be a subfactor and let $\mathcal{U}(N)$ (resp.,
 $\mathcal{U}(M)$) denote the group of unitaries of $N$ (resp., $M$)
 and $\mathcal{N}_M(N) := \{ u \in \mathcal{U}(M): u N u^* = N\}$
 denote the group of unitary normalizers of $N$ in $M$. Clearly,
 $\mathcal{U}(N)$ is a normal subgroup of $\mathcal{N}_M(N)$. For a
 finite index subfactor $N \subset M$, one associates the so-called
 Weyl group, which we shall denote by $G$, defined as the quotient
 group $\mathcal{N}_M(N)/\mathcal{U}(N)$ (\cite{Hong, JoPo, Kos, Cho,
   PiPo2}). It is known that for an irreducible subfactor $N \subset
 M$, $G$ is a finite group with order less than or equal to $[M:N]$
 (see, for instance, \cite{Hong, PiPo2} as well as \cite{Kos}).

 \begin{example}\label{weyl-example}
Let $G$ be a finite group acting outerly on a $II_1$-factor $N$ and $H$
be a normal subgroup of $G$. Then, the Weyl group of the subfactor $N\rtimes H
\subset N \rtimes G$ is isomorphic to the quotient group $G/H$.
   \end{example}
 \begin{proof}
Fix a set of coset representatives $\{g_i: 1 \leq i \leq n=[G:H]\}$ of
$H$ in $G$. Then, $\{u_{g_i}: 1 \leq i \leq n\}$ forms a two sided
orthonormal basis for $(N \rtimes G )/N$ (where $u_g$'s are as in
\Cref{ex-1}). Clearly, the map $G/H \ni g_i H
\stackrel{\varphi}{\longmapsto} [u_{g_i}] \in G$ is a bijection. Then,
note that
\[
\varphi(g_i H)\varphi( g_j H) = [u_{g_i}][u_{g_j}] = [u_{g_i}u_{g_j}]= [u_{g_i g_j}]. 
\]
On the other hand, if $g_i g_j H = g_k H$, then $g_i g_k = g_k h$ for
some $h \in H$, which implies that $ u_{g_i g_j} = u_{g_k}u_h $, i.e.,
$[u_{g_i g_j} ] = [u_k] $ in $G$. Thus, $\varphi(g_i H\cdot g_j H) =
\varphi(g_k H) = [u_{g_k}]= [u_{g_i g_j} ] = \varphi(g_i H)\varphi(
g_j H) $ for all $1 \leq i, j \leq n$. Hence, $K/H \cong G$.
   \end{proof}

Now, we make some useful observations related to regularity and
orthonormal basis determinded by the Weyl group. Recall that a
subfactor $N \subset M$ is said to be regular if $\mathcal{N}_M(N)'' =
M$.

\begin{proposition}\label{intermediatebasis1}
Let $P$ be an intermediate $II_1$-factor of a subfactor $N \subset M$. Let
$e_P$ denote the canonical Jones projection for the basic construction
$P \subset M \subset P_1$ and $\{{\lambda}_i\}$ be a finite set in
$P$. Then, $\{{\lambda}_i\}$ is a Pimsner-Popa basis for $P/N$ if and
only if $\sum_i \lambda_i e_1{\lambda}^*_i=e_P$. 
\end{proposition}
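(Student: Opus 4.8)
The plan is to test the proposed operator identity against the dense subspace $M\Omega \subset L^2(M)$, where $\Omega$ is the cyclic trace vector, thereby reducing the claim to a statement purely about conditional expectations. First I would recall the two ingredients. By definition, $\{\lambda_i\} \subset P$ is a (right) Pimsner--Popa basis for $P/N$ precisely when $x = \sum_i \lambda_i E_N(\lambda_i^* x)$ for every $x \in P$. And the Jones projection $e_1 = e^M_N$, being orthogonal projection onto $L^2(N)$, acts by $e_1(y\Omega) = E_N(y)\Omega$ for $y \in M$, while $e_P$ acts by $e_P(x\Omega) = E_P(x)\Omega$.

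Next I would compute the action of $T := \sum_i \lambda_i e_1 \lambda_i^*$ on $x\Omega$ for an arbitrary $x \in M$. Since each $\lambda_i \in P \subset M$ acts by left multiplication, applying $\lambda_i^*$, then $e_1$, then $\lambda_i$ gives $T(x\Omega) = \left(\sum_i \lambda_i E_N(\lambda_i^* x)\right)\Omega$. As $M\Omega$ is dense in $L^2(M)$, the operator equality $T = e_P$ is therefore equivalent to the vector identity $\sum_i \lambda_i E_N(\lambda_i^* x) = E_P(x)$ holding for all $x \in M$.

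The crux is to identify this last condition with the basis property. The key observation is the tower/module relation for conditional expectations: since $\lambda_i \in P$ and $N \subset P \subset M$, the composition $E_N = E_N \circ E_P$ together with the $P$-bimodule property of $E_P$ yields $E_N(\lambda_i^* x) = E_N\big(E_P(\lambda_i^* x)\big) = E_N\big(\lambda_i^* E_P(x)\big)$. Hence $\sum_i \lambda_i E_N(\lambda_i^* x) = \sum_i \lambda_i E_N(\lambda_i^* E_P(x))$ depends on $x$ only through $E_P(x) \in P$. If $\{\lambda_i\}$ is a basis, the right-hand side equals $E_P(x)$ for every $x \in M$, giving $T = e_P$. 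Conversely, if $T = e_P$, then specializing the identity $\sum_i \lambda_i E_N(\lambda_i^* x) = E_P(x)$ to $x \in P$, where $E_P(x) = x$, recovers exactly the defining relation $x = \sum_i \lambda_i E_N(\lambda_i^* x)$ of a Pimsner--Popa basis.

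I expect the only genuinely delicate point to be this reduction from all of $M$ to the intermediate factor $P$: the operator identity lives on $L^2(M)$ and so a priori encodes information about every $x \in M$, whereas the basis property is a statement about $x \in P$ alone. The relation $E_N(\lambda_i^* x) = E_N(\lambda_i^* E_P(x))$ is precisely what bridges the two, and verifying it cleanly is where the care lies; the surrounding bookkeeping is routine.
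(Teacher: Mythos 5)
Your proof is correct, and it takes a genuinely different route from the paper's. The paper handles the two directions asymmetrically: necessity is quoted from the proof of Proposition 2.14 of \cite{BDLR2017}, while sufficiency is obtained by compressing with $e_P$ and invoking the identification (from \cite{BL}) of the tower $Ne_P \subset Pe_P \subset e_P M_1 e_P$ with the basic construction $N \subset P \subset N_1$; the hypothesis $\sum_i \lambda_i e_1 \lambda_i^* = e_P$ is then pushed through the isomorphism $\phi$ to give $\sum_i \lambda_i e^P_N \lambda_i^* = 1$ in $N_1$, which is the basis condition. You instead work entirely on $L^2(M)$: testing $T=\sum_i\lambda_i e_1\lambda_i^*$ against the dense subspace $M\Omega$ reduces the operator identity to $\sum_i \lambda_i E_N(\lambda_i^* x) = E_P(x)$ for all $x\in M$, and the bridge $E_N(\lambda_i^* x) = E_N(\lambda_i^* E_P(x))$ (from $E_N = E_N\circ E_P$ and the $P$-bimodule property of $E_P$) shows the left side depends on $x$ only through $E_P(x)$, so both directions follow at once by restricting to, or extending from, $x\in P$. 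Your argument is more elementary and self-contained --- it avoids the nontrivial identification of the compressed tower with a basic construction and does not outsource the necessity direction --- at the cost of using the reconstruction-formula characterization of a Pimsner--Popa basis rather than the operator characterization $\sum_i\lambda_i e^P_N\lambda_i^*=1$ that the paper's tower-theoretic framework favors. The one point worth stating explicitly in a final write-up is the density step: $T$ and $e_P$ are both bounded, so agreement on $M\Omega$ does give operator equality.
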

\begin{proof}
If $\{{\lambda}_i\}$ is a Pimsner-Popa basis for $P/N$, then we know
that $\sum_i \lambda_i e_1 \lambda_i^* = e_P$ - see , for instance,
the proof of \cite[Proposition 2.14]{BDLR2017}. This proves necessity.

 To prove sufficiency, consider the basic construction $N\subset
 P\subset N_1$ with Jones projection $e^P_N$. Recall, from
 \cite{BL}, that this tower is isomorphic to the tower $Ne_P\subset
 e_PMe_P=Pe_P \subset e_PM_1e_P$ via a map $\phi: N_1 \rar e_P
 M_1 e_P$ satisfying $\phi(x)=xe_P$ for all $x\in P$.  The Jones
 projection for the second tower is given by $e_Pe_1=e_1$.  Note that
 $\sum_i {\lambda}_ie_Pe_1e_P{\lambda}^*_i= \sum_i
 {\lambda}_ie_1{\lambda}^*_i=e_P.$ Thus, we obtain
 $$
 \phi\big(\sum_i
 {\lambda}_ie^P_N{\lambda}^*_i\big)=\sum_i
 ({\lambda}_ie_P)e_1(e_P{\lambda}^*_i)=e_P=\phi(1).
 $$ This implies that $\sum \lambda_ie^P_N{\lambda}^*_i=1$ and, hence,
 $\{\lambda_i\}$ is a Pimsner-Popa basis for $P/N$.
\end{proof}

\begin{proposition}\label{p=ep}
Let $N\subset M$ be an irreducible subfactor and $P:=
\mathcal{N}_M(N)''$. If $\{u_g: g = [u_g] \in G\}$ denotes a
set of coset representatives of $G$ in $\mathcal{N}_M(N)$, then
$\{u_g: g \in G\} \subset \mathcal{N}_P(N)$ and it forms
  a two sided orthonormal basis for $P/N.$
\end{proposition}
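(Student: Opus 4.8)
The plan is to verify the two asserted properties separately, reducing the basis statement to \Cref{intermediatebasis1}. The containment $\{u_g\}\subset\mathcal N_P(N)$ is immediate: each $u_g$ lies in $\mathcal N_M(N)\subset\{\mathcal N_M(N)\}''=P$ and satisfies $u_gNu_g^*=N$ with $N\subset P$, so $u_g\in\mathcal N_P(N)$. It then remains to show that $\{u_g:g\in G\}$ is orthonormal over $N$ (on both sides) and spans $P$ over $N$; for the latter I would prove $\sum_g u_ge_1u_g^*=e_P$ and invoke \Cref{intermediatebasis1}.

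The computational heart is the following consequence of irreducibility: if $v\in\mathcal N_M(N)$ with $[v]\neq e$ in $G$, then $E_N(v)=0$. To see this, set $a:=E_N(v)\in N$. For every $n\in N$ one has $vn=\mathrm{Ad}_v(n)\,v$ with $\mathrm{Ad}_v(n)\in N$, so comparing $E_N(vn)=an$ (bimodularity) with $E_N(vn)=\mathrm{Ad}_v(n)\,a$ yields the intertwining relation $an=\mathrm{Ad}_v(n)\,a$ for all $n\in N$. A short manipulation then shows that both $a^*a$ and $aa^*$ commute with $N$; being elements of the factor $N$, they are scalars, equal by traciality, say $a^*a=aa^*=t\geq0$. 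If $t>0$, then $u:=t^{-1/2}a$ is a unitary in $N$ implementing $\mathrm{Ad}_v$ on $N$, whence $u^*v$ commutes with $N$ and lies in $N'\cap M=\C$ by irreducibility; this forces $v\in\mathcal U(N)$, i.e. $[v]=e$, a contradiction. Hence $t=0$ and $E_N(v)=0$.

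Applying this to $v=u_g^*u_h$ and to $v=u_gu_h^*$ — normalizers of class $g^{-1}h$ and $gh^{-1}$ respectively, hence trivial in $G$ exactly when $g=h$ — gives $E_N(u_g^*u_h)=\delta_{gh}=E_N(u_gu_h^*)$, so $\{u_g\}$ is two-sided orthonormal. Orthonormality also shows the projections $u_ge_1u_g^*$ are mutually orthogonal (since $e_1u_g^*u_he_1=E_N(u_g^*u_h)e_1=\delta_{gh}e_1$) and each dominated by $e_P$ (because $u_g\in P$ and $e_1\le e_P$); thus $\sum_{g\in F}u_ge_1u_g^*\le e_P$ for every finite $F\subseteq G$, and taking traces gives $|F|\,\tau\le \tr(e_P)$. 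Consequently $G$ is finite, with $|G|\le[P:N]$.

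It then remains to prove $\sum_g u_ge_1u_g^*=e_P$. Since $\mathcal N_M(N)$ is a group containing $\mathcal U(N)$, its linear span is the $*$-algebra $\sum_{g}Nu_g$, whose weak closure is $\{\mathcal N_M(N)\}''=P$; as $G$ is finite this is a finite orthogonal sum, and by Kaplansky density $L^2(P)=\bigoplus_{g}\overline{Nu_g\Omega}$. Because $u_gN=Nu_g$, the projection $u_ge_1u_g^*$ is exactly the projection onto $\overline{Nu_g\Omega}$, so summing over $g$ yields the projection onto $L^2(P)$, namely $e_P$. By \Cref{intermediatebasis1}, $\{u_g\}$ is a (right) Pimsner--Popa basis for $P/N$, and the symmetric argument applied to the coset representatives $\{u_g^*\}$ supplies the left version; hence $\{u_g\}$ is a two-sided orthonormal basis. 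I expect the main obstacle to be this final spanning step — cleanly identifying $L^2(P)$ with $\bigoplus_g\overline{Nu_g\Omega}$, and in particular securing the finiteness of $G$ so that the density argument produces a genuine orthonormal basis rather than merely an $L^2$-dense family.
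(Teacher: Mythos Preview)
Your argument is correct and complete; the route, however, differs substantially from the paper's. The paper's proof is a two-line reduction: it notes that $\mathcal N_M(N)\subset P$ forces $\mathcal N_P(N)=\mathcal N_M(N)$, so $N\subset P$ is an irreducible \emph{regular} subfactor with Weyl group $G$, and then simply invokes \cite[Lemma~3.1]{Hong}, which asserts exactly that coset representatives of the Weyl group of an irreducible regular subfactor form a two-sided orthonormal basis. You instead reprove Hong's lemma from first principles: the key vanishing $E_N(v)=0$ for $[v]\neq e$ (your intertwining/polar-part argument via $N'\cap M=\C$) yields orthonormality, the trace estimate gives $|G|\le[P:N]$, and the $L^2$-decomposition $L^2(P)=\bigoplus_{g}\overline{Nu_g\Omega}$ (from weak density of $\mathrm{span}\,\mathcal N_M(N)$ in $P$ together with finiteness and orthogonality) yields $\sum_g u_ge_1u_g^*=e_P$, whence \Cref{intermediatebasis1} applies. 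Your approach is self-contained and makes the paper independent of Hong's lemma at this point; the paper's approach is shorter but outsources the substance. One cosmetic remark: the appeal to ``Kaplansky density'' is slightly off-target --- what you actually use is that a weakly dense unital $*$-subalgebra containing $1$ has $L^2$-dense orbit of the cyclic vector $\Omega$, which follows directly from the double commutant theorem; the conclusion is unaffected.
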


\begin{proof}
Since $N \subset M$ is irreducible, it follows that $P$ is a
$II_1$-factor. By definition, we have $\mathcal{N}_P(N) \subset
\mathcal{N}_M(N)$. On the other hand, $\mathcal{N}_M(N) \subset
P$. So, if $u \in \mathcal{N}_M(N)$, then $u \in
\mathcal{N}_P(N)$. Hence, $\mathcal{N}_P(N) =
\mathcal{N}_M(N)$. Therefore, we conclude that $N \subset P$ is a
regular subfactor and also that the Weyl group of $N \subset P$ is the
same as that of $N \subset M$.
  
Then, since $N \subset P$ is regular and irreducible, we conclude, from
\cite[Lemma 3.1]{Hong}, that $\{u_g:g\in G\}$ forms a two sided
orthonormal basis for $P/N$.
\end{proof}

Above two propositions yield the following improvement of \cite[Lemma 3.1]{Hong}:
\begin{theorem}\label{reg-onb}
 Let $N\subset M$ be an irreducible subfactor and 
 $\{u_g : g \in G\}$ be a set of coset representatives of $G$ in
 $\mathcal{N}_M(N)$. Then, the following are equivalent:
 \begin{enumerate}
\item $[M:N] =|G|$.
\item $\{ u_g : g \in G\}$ is a two sided  orthonormal basis for $M/N$.
 \item $N \subset M$ is regular.
\end{enumerate}
\end{theorem}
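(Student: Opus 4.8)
The plan is to route everything through the intermediate subfactor $P := \{\mathcal N_M(N)\}''$, so that the content of all three conditions collapses to the single statement $P = M$. Indeed, by the very definition of regularity, $N \subset M$ is regular if and only if $P = M$, which is condition (3). Since $N \subset M$ is irreducible, \Cref{p=ep} tells me that $P$ is a $II_1$-factor, that its Weyl group agrees with $G$, and---crucially---that $\{u_g : g \in G\}$ is a two-sided orthonormal basis for $P/N$. This last fact is the engine of the whole argument.

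The one computation I would actually perform is to pin down the index $[P:N]$. Because $\{u_g\}$ is an orthonormal basis for $P/N$, \Cref{intermediatebasis1} gives $\sum_{g \in G} u_g e_1 u_g^* = e_P$. Applying $tr$ and using that each coset representative $u_g \in \mathcal N_M(N)$ is a unitary, so that $tr(u_g e_1 u_g^*) = tr(e_1) = \tau$, I obtain $tr(e_P) = |G|\,\tau = |G|/[M:N]$. Since $e_P$ is the Jones projection of $P \subset M$, one has $tr(e_P) = [M:P]^{-1}$; comparing the two expressions and invoking multiplicativity of the index, $[M:N] = [M:P]\,[P:N]$, yields $[P:N] = |G|$.

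With $[P:N] = |G|$ established, the equivalences follow almost formally. For (1)$\Leftrightarrow$(3): from $[M:N] = [M:P]\,[P:N] = [M:P]\,|G|$ I see that $[M:N] = |G|$ holds exactly when $[M:P] = 1$, i.e. $P = M$, i.e. $N \subset M$ is regular. For (3)$\Rightarrow$(2): if $P = M$, then \Cref{p=ep} literally states that $\{u_g\}$ is a two-sided orthonormal basis for $M/N$. For the converse (2)$\Rightarrow$(3): if $\{u_g\}$ is such a basis for $M/N$, then \Cref{intermediatebasis1} applied to $M$ itself (where $e_M = 1$) forces $\sum_g u_g e_1 u_g^* = 1$; comparing with the identity $\sum_g u_g e_1 u_g^* = e_P$ already in hand gives $e_P = 1$, whence $P = M$ and regularity.

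Since the heavy lifting is done by \Cref{p=ep} and \Cref{intermediatebasis1}, the argument is short; the only place demanding care is the index computation, where I must use the unitarity of the $u_g$ and the normalization $tr(e_P) = [M:P]^{-1}$ of the Jones projection. I do not anticipate any genuine obstacle beyond this bookkeeping, as the conceptual work---identifying $P = \{\mathcal N_M(N)\}''$ as the right intermediate factor and producing the orthonormal basis $\{u_g\}$ for $P/N$---has already been carried out in the two preceding propositions.
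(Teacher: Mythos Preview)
Your proposal is correct and follows essentially the same approach as the paper: both arguments hinge on the trace computation $tr\big(\sum_g u_g e_1 u_g^*\big) = |G|\,\tau$ together with \Cref{intermediatebasis1} and \Cref{p=ep}. The only cosmetic difference is that the paper routes the equivalences as $(1)\Leftrightarrow(2)$ and $(2)\Leftrightarrow(3)$, whereas you route them as $(1)\Leftrightarrow(3)$ and $(2)\Leftrightarrow(3)$ via the intermediate step $[P:N]=|G|$; the underlying computations are identical.
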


\begin{proof}
$(1) \Leftrightarrow (2):$ Let $p :=\sum_g u_g e_1 u_g^*$. Then, $\tr(p) = \sum_{g \in G} \tr(u_g
  e_1 u_g^*) = |G| [M:N]^{-1} $. Thus, $\{ u_g : g \in G\}$ is an
  orthonormal basis for $M/N$ if and only if $[M:N] = |G|$. 

 $(2) \Leftrightarrow (3):$ This equivalence follows immediately from 
  \Cref{intermediatebasis1} and \Cref{p=ep}.
\end{proof}

Analogous to the well known Goldman's Theorem (\cite{Gol}, also see,
\cite{Jon}) for a subfactor with index $2$, it is known that an
irreducible regular subfactor $N \subset M$ can be realized as the
group subfactor $N \subset N \rtimes G$, where $G$ is the Weyl group
of $N \subset M$ which acts outerly on $N$ - see \cite{Hong, PiPo2,
  Kos} and the references therein. As a consequence, we deduce the
following version of Goldman's type theorem for irreducible
quadrilaterals.

 \begin{theorem}\label{hk-generate-g}
Let $(N, P, Q, M)$ be an irreducible quadrilateral such that $N\subset
P$ and $N \subset Q$ are both regular. Then, $G$ acts outerly on $N$
and $(N, P, Q, M) = (N, N \rtimes H, N \rtimes K, N \rtimes G)$, where
$H, K$ and $G$ are the Weyl groups of $N \subset P$, $N \subset Q$ and
$N \subset M$, respectively.
\end{theorem}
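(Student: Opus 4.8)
The plan is to first upgrade the regularity hypotheses on the two sides $N\subset P$ and $N\subset Q$ to regularity of the whole subfactor $N\subset M$, and then to feed $N\subset M$ into the Goldman-type realization theorem for irreducible regular subfactors (Popa \cite{PiPo2}, Kosaki \cite{Kos}, Hong \cite{Hong}) recalled above; the only remaining task is to locate $P$ and $Q$ inside the resulting crossed product.

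For the regularity step, I would argue as follows. Since $N\subset P$ is regular we have $P=\{\mathcal{N}_P(N)\}''$, and similarly $Q=\{\mathcal{N}_Q(N)\}''$. Every unitary of $P$ that normalizes $N$ also normalizes $N$ inside $M$, so $\mathcal{N}_P(N)\subseteq\mathcal{N}_M(N)$ and likewise $\mathcal{N}_Q(N)\subseteq\mathcal{N}_M(N)$. As $(N,P,Q,M)$ is a quadrilateral, $M=P\vee Q=\{\mathcal{N}_P(N)\cup\mathcal{N}_Q(N)\}''\subseteq\{\mathcal{N}_M(N)\}''\subseteq M$, which forces $M=\{\mathcal{N}_M(N)\}''$. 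Thus $N\subset M$ is regular, and it is irreducible by hypothesis. The realization theorem then yields an outer action of the Weyl group $G$ of $N\subset M$ on $N$ together with an identification $M=N\rtimes G$; by \Cref{reg-onb} the canonical implementing unitaries $\{u_g:g\in G\}$ form a two-sided orthonormal basis for $M/N$ and $[M:N]=|G|$.

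It then remains to identify $P$ and $Q$ with $N\rtimes H$ and $N\rtimes K$. First, the inclusion $\mathcal{N}_P(N)\hookrightarrow\mathcal{N}_M(N)$ descends to an injective homomorphism of Weyl groups, realizing $H=\mathcal{N}_P(N)/\mathcal{U}(N)$ as a subgroup of $G$; similarly $K\leq G$. For $h\in H$, choosing $v\in\mathcal{N}_P(N)$ with $[v]=h$ and writing $v=n\,u_h$ with $n\in\mathcal{U}(N)$ (possible since $[v]=[u_h]$ in $G$), we get $u_h=n^*v\in P$; hence $N\rtimes H:=\{N\cup\{u_h:h\in H\}\}''\subseteq P$. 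Conversely, every element of $\mathcal{N}_P(N)$ has the form $n\,u_h$ with $n\in N$ and $h\in H$, so $P=\{\mathcal{N}_P(N)\}''\subseteq N\rtimes H$, giving $P=N\rtimes H$; the same argument gives $Q=N\rtimes K$. Note that the subalgebra $N\rtimes H$ is unambiguous, since replacing $u_h$ by $n\,u_h$ with $n\in N\subseteq P$ does not change the generated algebra.

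The main obstacle I anticipate is the regularity step itself, namely spotting that regularity of $N\subset P$ and $N\subset Q$ propagates to $M=P\vee Q$; after that the argument is essentially a normalizer chase wrapped around the cited realization theorem. A secondary point to keep in mind is that this realization is an \emph{honest} (untwisted) crossed product: this is precisely where Sutherland's vanishing-of-cohomology input \cite{Sut} enters, guaranteeing that $G$ acts by a genuine outer action rather than merely a projective one, so that $N\rtimes H$ and $N\rtimes K$ are the intended crossed-product subfactors.
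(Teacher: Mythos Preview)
Your proof is correct and follows the same overall strategy as the paper: propagate regularity from the two sides to $N\subset M$ via $M=P\vee Q=\{\mathcal{N}_P(N)\cup\mathcal{N}_Q(N)\}''\subseteq\{\mathcal{N}_M(N)\}''$, invoke the Hong/Popa/Kosaki realization of an irreducible regular subfactor as a crossed product $N\rtimes G$, and then identify $P$ and $Q$ inside it. The only real difference is in this last identification step. The paper appeals to Galois correspondence to write $P=N\rtimes H'$ for a unique subgroup $H'\leq G$, observes that $v_{h'}\in P$ forces $H'\subseteq H$, and then matches cardinalities via \Cref{weyl-example} (the Weyl group of $N\subset N\rtimes H'$ is $H'$, but it is also $H$ by definition) to conclude $H'=H$. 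You instead verify the two inclusions $N\rtimes H\subseteq P$ and $P\subseteq N\rtimes H$ directly by a normalizer chase, using that every $v\in\mathcal{N}_P(N)$ differs from some $u_h$ by a unitary of $N$. Your route is slightly more self-contained in that it sidesteps both Galois correspondence and the cardinality match, while the paper's route makes the subgroup structure transparent from the outset; both arguments are short and equally valid.
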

\begin{proof}
 First, note that $N \subset M$ is regular because \[ M= P \vee Q =
 \mathcal{N}_P(N)'' \vee \mathcal{N}_Q(N)'' \subseteq
 \{\mathcal{N}_P(N) \cup \mathcal{N}_Q(N)\}'' \subseteq
 \mathcal{N}_M(N)'' \subseteq M.
 \]
Then, since $N \subset M$ is regular and irreducible, Hong \cite{Hong}
had shown that if $N_{-1} \subset N \subset M$ is an instance of
downward basic construction with Jones projection $e_{-1}$, then there
is a representation $G\ni g \mapsto v_g \in \mathcal{U}(N_{-1}'\cap
M)$ such that $v_g \in \mathcal{N}_M(N)$ for all $g\in G$, $M = \{N,
v_g: g \in G\}''$ and $ G \in g \mapsto Ad_{v_g}\in Aut(N)$ is an
outer action of $G$ on $N$, i.e., $(N \subset M) = (N \subset N
\rtimes G)$ - see \cite[Lemma 3.3 and Theorem 3.1]{Hong}. Also, for
each $g \in G$, the coset $ v_g\, \mathcal{U}(N) = g$ in $G$.

  So, by Galois correspondence, $P = N \rtimes H'$ and $Q = N \rtimes K'$
  for unique subgroups $H'$ and $K'$ of $G$. We assert that  $H = H'$ and $K = K'$.

We have $P = N \rtimes H' = \{N, v_{h'}: h' \in H'\}''$. So, for each
$h' \in H'$, $v_{h'} \in P$; thus, $\{v_{h'}: h' \in H'\} \subset
\mathcal{N}_P(N)$. Also, $h'= [v_{h'}]\in
\mathcal{N}_P(N)/\mathcal{U}(N) = H $, so that $H' \subset H$. As seen
in \Cref{weyl-example}, $H' \cong H$; so, we must have $|H'| = |H|$ and hence
$H' = H$. Likewise, we obtain $K = K'$. Hence, $(N, P, Q, M) = (N, N
\rtimes H, N \rtimes K, N \rtimes G)$. 
\end{proof}

\begin{corollary}\label{hk-generate-g2}
 Let $(N, P, Q, M)$ be an irreducible quadrilateral such that $N\subset P$
and $N \subset Q$ are both regular. Then, the Weyl groups of $N \subset P$ and $N \subset Q$ together
generate the Weyl group of $N \subset M$.
\end{corollary}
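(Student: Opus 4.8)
The plan is to leverage \Cref{hk-generate-g}, which I would invoke as a black box. That theorem already establishes that under the hypotheses---an irreducible quadrilateral with $N \subset P$ and $N \subset Q$ both regular---we have the identification $(N, P, Q, M) = (N, N \rtimes H, N \rtimes K, N \rtimes G)$, where $G$ acts outerly on $N$ and $H, K, G$ are the Weyl groups of $N \subset P$, $N \subset Q$, $N \subset M$ respectively. Crucially, in the proof of \Cref{hk-generate-g} the subgroups $H$ and $K$ arise as genuine subgroups $H', K'$ of $G$ via the Galois correspondence, realized concretely through the normalizer representation $g \mapsto v_g$. So the statement that $H$ and $K$ together generate $G$ is really a statement about these subgroups of $G$ inside the single ambient group $G$.

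The key step is then to show $M = N \rtimes \langle H, K \rangle$ forces $\langle H, K \rangle = G$. First I would recall that $M = P \vee Q$, since $(N,P,Q,M)$ is a quadrilateral. Under the crossed-product identification, $P = N \rtimes H$ and $Q = N \rtimes K$, where by $H$ and $K$ I now mean the subgroups $H', K'$ of $G$ from the proof of \Cref{hk-generate-g}. Writing $P = \{N, v_h : h \in H\}''$ and $Q = \{N, v_k : k \in K\}''$, the join $P \vee Q$ is the von Neumann algebra generated by $N$ together with all $v_h$ and $v_k$, which equals $\{N, v_g : g \in \langle H, K\rangle\}'' = N \rtimes \langle H, K\rangle$. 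Hence $M = N \rtimes \langle H, K \rangle$.

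Finally, comparing indices (or invoking the Galois correspondence for the outer action of $G$ on $N$) yields $\langle H, K\rangle = G$. Concretely, $[M:N] = |G|$ since $N \subset M = N \rtimes G$ is the group subfactor for the outer $G$-action, while $[M:N] = [N \rtimes \langle H, K\rangle : N] = |\langle H, K\rangle|$ by the same computation for the subgroup $\langle H, K\rangle$. Equating cardinalities gives $|\langle H, K\rangle| = |G|$, and since $\langle H, K\rangle \subseteq G$ is a subgroup, we conclude $\langle H, K\rangle = G$. This is exactly the assertion that the Weyl groups of $N \subset P$ and $N \subset Q$ together generate the Weyl group of $N \subset M$.

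I do not expect any serious obstacle here, since almost all the work is already packaged inside \Cref{hk-generate-g}; the only point requiring mild care is making sure that the join $P \vee Q$ of the two crossed products is indeed the crossed product by the generated subgroup rather than something larger, but this follows immediately because $N \rtimes \langle H, K\rangle$ is itself a von Neumann algebra containing both $P$ and $Q$ and contained in $M$, and conversely it is generated by $P$ and $Q$. If one prefers to bypass the index computation, one may instead appeal directly to the Galois correspondence between intermediate subfactors of $N \subset N \rtimes G$ and subgroups of $G$: the smallest intermediate subfactor containing both $P$ and $Q$ corresponds to the smallest subgroup containing both $H$ and $K$, namely $\langle H, K\rangle$, and since that intermediate subfactor is $M = P \vee Q$, the corresponding subgroup must be all of $G$.
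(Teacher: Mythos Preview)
Your proof is correct and follows essentially the same route as the paper: invoke \Cref{hk-generate-g} to identify the quadrilateral with $(N, N\rtimes H, N\rtimes K, N\rtimes G)$, use $M = P\vee Q$ to get $N\rtimes G \subseteq N\rtimes \langle H,K\rangle \subseteq N\rtimes G$, and conclude $\langle H,K\rangle = G$ via Galois correspondence (or equivalently your index count). The paper's argument is nearly identical, just slightly terser.
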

\begin{proof}
 Let $G'$ be the subgroup of $G$ generated by $H$ and $K$, then $N
  \rtimes G' \subseteq N \rtimes G$. Also, since $M = P\vee Q$, we have
  \[
  N \rtimes G = (N \rtimes H)\vee (N \rtimes K) \subseteq N \rtimes G'\subseteq N \rtimes G.
  \]
  Hence, by Galois correspondence again, we must have $G = G'$, i.e.,
  $G$ is generated by its subgroups $H$ and $K$.
\end{proof}
We have the following  partial converse of \Cref{hk-generate-g2}.
\begin{proposition}
 Let $(N,P,Q,M)$ be an irreducible quadruple such that $N\subset P$
 and $N\subset Q$ are both regular. If $N\subset M$ is regular
 and the Weyl groups of $N\subset P$ and $N\subset Q$ together
 generate the Weyl group of $N\subset M$, then $M=P\vee Q$.
\end{proposition}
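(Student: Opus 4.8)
The plan is to run the arguments of \Cref{hk-generate-g} and \Cref{hk-generate-g2} in reverse, relying on the Galois correspondence for crossed products by outer actions. First I would note that the hypotheses already place us inside the setting of \Cref{hk-generate-g}: since $N \subset M$ is regular and irreducible, Hong's realization \cite[Lemma 3.3 and Theorem 3.1]{Hong} identifies $(N \subset M)$ with $(N \subset N \rtimes G)$ for an outer action of the Weyl group $G$ of $N \subset M$. The regularity and irreducibility of $N \subset P$ and $N \subset Q$, combined with the Galois correspondence, then yield $P = N \rtimes H$ and $Q = N \rtimes K$, where $H$ and $K$ are the Weyl groups of $N \subset P$ and $N \subset Q$. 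This is exactly the identification carried out in the proof of \Cref{hk-generate-g}, and crucially that portion of the argument nowhere invokes $M = P \vee Q$, so it remains valid for a general irreducible quadruple.

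With these identifications in hand, the key step is to locate $P \vee Q$ via the Galois correspondence. Since $N \subset M$ is irreducible, $P \vee Q$ is an intermediate subfactor, hence $P \vee Q = N \rtimes L$ for a unique subgroup $L \leq G$. The inclusions $P, Q \subseteq P \vee Q$ translate, under the correspondence, into $H \subseteq L$ and $K \subseteq L$, so that $\langle H, K\rangle \subseteq L \subseteq G$. By hypothesis $\langle H, K\rangle = G$, and therefore $L = G$, giving $P \vee Q = N \rtimes G = M$, as desired. (Alternatively, and more explicitly, the join $(N \rtimes H) \vee (N \rtimes K)$ is generated by $N$ together with the normalizing unitaries $\{v_h : h \in H\}$ and $\{v_k : k \in K\}$; since $v_g v_{g'}$ agrees with $v_{gg'}$ up to a unitary of $N$, this algebra equals $N \rtimes \langle H, K\rangle = N \rtimes G = M$, which recovers the same conclusion and makes this proposition the precise converse to \Cref{hk-generate-g2}.)

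The main obstacle is not computational but bookkeeping: one must verify that every step survives the \emph{absence} of the quadrilateral hypothesis $M = P \vee Q$. Concretely, I would check (i) that Hong's realization and the ensuing identifications $P = N \rtimes H$, $Q = N \rtimes K$ require only regularity and irreducibility of the relevant subfactors, which is the case because the proof of \Cref{hk-generate-g} isolates exactly this part; and (ii) that the Galois correspondence legitimately applies to $P \vee Q$, for which it suffices that $P \vee Q$ be an intermediate \emph{subfactor}. This last point follows since irreducibility of $N \subset M$ forces $N' \cap (P \vee Q) \subseteq N' \cap M = \C$, so $P \vee Q$ is indeed a factor. Once these two points are secured, the containment $G = \langle H, K\rangle \subseteq L \subseteq G$ forces $L = G$ immediately, and the proof is complete.
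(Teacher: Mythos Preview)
Your proof is correct. Your parenthetical ``alternative'' is essentially the paper's own argument: the paper fixes coset representatives $\{u_g : g \in G\}$, notes (via \Cref{reg-onb}) that they form an orthonormal basis for $M/N$, observes that $u_{gg'}$ differs from $u_g u_{g'}$ by a unitary of $N$, and concludes from $G = \langle H, K\rangle$ that $M = \sum_g N u_g \subseteq (\sum_h N u_h) \vee (\sum_k N u_k) = P \vee Q$. The paper does this directly, without first passing through Hong's crossed-product realization.

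Your main route---realize $(N\subset M)$ as $(N\subset N\rtimes G)$ via Hong, identify $P = N\rtimes H$ and $Q = N\rtimes K$ by the portion of \Cref{hk-generate-g} that does not use $M = P\vee Q$, then apply Galois correspondence to the intermediate subfactor $P\vee Q = N\rtimes L$ and squeeze $G = \langle H,K\rangle \subseteq L \subseteq G$---is a genuinely different, more structural argument. It trades the paper's bare-hands basis computation for a clean lattice-isomorphism step, at the cost of invoking Hong's cocycle-vanishing result and the full Galois correspondence. Both approaches are short; the paper's is more self-contained, while yours makes the converse relationship to \Cref{hk-generate-g2} transparent. Your verification that points (i) and (ii) survive dropping the quadrilateral hypothesis is accurate and is exactly the bookkeeping needed.
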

\begin{proof}
 Fix any set of coset
  representatives $\{u_g: g \in G\}$ of $G$ in
  $\mathcal{N}_M(N)$. Since $N \subset M$ is regular, $\{u_g: g \in
  G\}$ forms a two sided orthonormal basis for $M/N$, by
  \Cref{reg-onb}. Note that each $g$ in $G$ is a word in $H \cup K$
  and for any pair $g, g' \in G$, we have $[u_{gg'}] = gg' =
  [u_g][u_{g'}]= [u_g u_{g'}]$, so that $u_{gg'} = v u_g u_g' $ for some $v\in
  \mathcal{U}(N)$. Thus, $M = \sum_g N u_g \subseteq (\sum_h N u_h)
  \vee (\sum_k u_k) = P \vee Q$.
\end{proof}

Following corollary first appeared implicitly in the proof of
\cite[Theorem 6.2]{SW}. We include it here, as an application of
\Cref{hk-generate-g} and \Cref{hk-generate-g2}.

\begin{corollary}
Let $(N, P, Q, M)$ be an irreducible quadrilateral with $[P:N] = 2 =
[Q:N]$. Then, $[M:N]$ is an even integer and the Weyl group of $N \subset
M$ is isomorphic to the Dihedral group of order $2n$, where $n = [M:P]=[M:Q]$.
\end{corollary}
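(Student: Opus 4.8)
The plan is to reduce the statement entirely to elementary group theory by invoking the crossed-product realization of \Cref{hk-generate-g}. First I would note that, by Goldman's theorem, each of the index-$2$ subfactors $N\subset P$ and $N\subset Q$ is automatically irreducible and regular, with Weyl group isomorphic to $\Z/2\Z$. Consequently the quadrilateral satisfies the hypotheses of \Cref{hk-generate-g}, so that $G$ acts outerly on $N$ and $(N,P,Q,M)=(N,N\rtimes H,N\rtimes K,N\rtimes G)$, where $H=\langle a\rangle$ and $K=\langle b\rangle$ are the Weyl groups of $N\subset P$ and $N\subset Q$ with $a^2=b^2=e$, and $G$ is the Weyl group of $N\subset M$. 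In particular $G$ is finite, since $[M:N]<\infty$ forces $|G|=[M:N]$ (e.g.\ by \Cref{reg-onb}, as $N\subset M$ is regular).

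Next I would pin down the abstract structure of $G$. By \Cref{hk-generate-g2}, $G$ is generated by $H$ and $K$, i.e.\ $G=\langle a,b\rangle$ is generated by the two involutions $a$ and $b$. Since $(N,P,Q,M)$ is a genuine quadrilateral we have $P\neq Q$ (otherwise $N=P\wedge Q=P$ would contradict $[P:N]=2$), hence $H\neq K$, forcing $a\neq b$ and $r:=ab\neq e$. It is a classical fact that a group generated by two distinct involutions is dihedral: with $r=ab$ one computes $ara^{-1}=ba=r^{-1}$, so $\langle r\rangle$ is normal and $G=\langle r\rangle\rtimes\langle a\rangle$ is the dihedral group of order $2n$, where $n$ is the (finite, since $G$ is finite) order of $r$, and $n\geq 2$.

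Finally I would read off the indices from the crossed-product picture. We have $[M:N]=|G|=2n$, which is even, and $[M:P]=[G:H]=|G|/|H|=n=|G|/|K|=[G:K]=[M:Q]$. Thus $n=[M:P]=[M:Q]$, and the Weyl group $G$ of $N\subset M$ is precisely the dihedral group of order $2n$, as claimed.

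I expect no serious obstacle here: essentially all of the analytic content has already been absorbed into \Cref{hk-generate-g} and \Cref{hk-generate-g2}, after which the result is a routine application of the characterization of groups generated by two involutions. The only points requiring a little care are ruling out the degenerate case $n=1$ (equivalently $P=Q$), which the quadrilateral hypothesis precludes, and matching the order $n$ of $ab$ with the common value $[M:P]=[M:Q]$.
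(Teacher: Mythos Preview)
Your proposal is correct and follows essentially the same route as the paper: invoke Goldman's theorem to get regularity of $N\subset P$ and $N\subset Q$, apply \Cref{hk-generate-g} and \Cref{hk-generate-g2} to realize $G$ as a finite group generated by two involutions, and conclude that $G$ is dihedral. The only difference is cosmetic: the paper cites \cite[Theorem~6.8]{Suz} for the dihedral characterization, whereas you supply the standard direct argument via $r=ab$; you are also slightly more explicit than the paper in ruling out $P=Q$ and in verifying $n=[G:H]=[M:P]=[M:Q]$.
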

\begin{proof}
  By Goldman's Theorem (\cite{Gol, Jon}), we know that $(N \subset
  P)\cong (N \subset N \rtimes_{\sigma} \Z_2)$ and $ (N \subset
  Q)\cong (N \subset N \rtimes_{\tau} \Z_2)$ for some outer actions
  $\sigma$ and $ \tau$ of $\Z_2$ on $N$ and hence both $N \subset P$
  and $N \subset Q$ are regular. Then, by \Cref{reg-onb}, we obtain
  $|H| = [P:N] =2$ and $|K| =[Q:N] =2$, where $H$ and $K$ are as in
  \Cref{hk-generate-g}. So, $H$ and $K$ are both cyclic of order
  $2$. By \Cref{hk-generate-g}, $N \subset M$ is regular and hence
  $[M:N] = |G|$ by \Cref{reg-onb}. Also, by \Cref{hk-generate-g}, $G$
  is a finite group generated by $H$ and $K$. Thus, $G$ is generated
  by two elements which are both of order $2$.  Hence, by
  \cite[Theorem 6.8]{Suz}, $G$ is isomorphic to the Dihedral group of
  order $2n$.
  \end{proof}
We conclude this section with the demonstration of a direct
relationship between angles and Weyl groups of interemdiate subfactors
of an irreducible quadruple. As above, for a quadruple $(N,P,Q,M)$, we
denote by $G,H$ and $K$ the Weyl groups of $N\subset M, N\subset P$
and $N\subset Q$, respectively. First, we deduce the relationship for
an irreducible quadrilateral.

\begin{theorem}\label{betacomutationforregularquadrilateral}
Let $(N,P,Q,M)$ be an irreducible quadrilateral such that $N\subseteq
 P$ and $N\subseteq Q$ are both regular. Then, $\alpha(P,Q)=\pi/2$,
 i.e., $(N, P, Q, M)$ is a commuting square, and
 $$\cos \big(\beta(P,Q)\big)=\displaystyle \frac{\frac{\lvert G \rvert}{\lvert
     H\rvert \lvert K\rvert}-1}{\sqrt{[G:H]-1}\sqrt{[G:K]-1}}.$$

 In particular, $(N, P, Q, M)$ is a cocommuting square if and only if $G = HK$.
\end{theorem}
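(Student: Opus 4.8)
The plan is to reduce the whole statement to the crossed-product computation of \Cref{ex-1} by first placing the quadrilateral into a standard group-subfactor form. First I would invoke \Cref{hk-generate-g}: since $(N,P,Q,M)$ is an irreducible quadrilateral with $N\subset P$ and $N\subset Q$ regular, the Weyl group $G$ of $N\subset M$ acts outerly on $N$ and there is an identification of quadruples $(N,P,Q,M)=(N,N\rtimes H,N\rtimes K,N\rtimes G)$, where $H$ and $K$ are the Weyl groups of $N\subset P$ and $N\subset Q$. By \Cref{isomorphism}, such an identification preserves both the interior and exterior angles, so it suffices to carry out the computation for the model quadruple on the right-hand side.

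Next I would extract the group-theoretic content of the quadrilateral hypothesis. By the Galois correspondence for outer crossed products, the meet $P\wedge Q$ corresponds to $H\cap K$, that is, $P\wedge Q=N\rtimes(H\cap K)$. Since $(N,P,Q,M)$ is a quadrilateral we have $P\wedge Q=N$, which forces $H\cap K=\{e\}$ and hence $|HK|=|H|\,|K|$.

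With these two observations in place, both formulas drop out of \Cref{ex-1}, applied with $S=N$ and the group $G$, where the role of the bottom subgroup is played by the trivial subgroup $\{e\}$ and the two intermediate subgroups are $H$ and $K$ (so that $K$ and $L$ of \Cref{ex-1} become our $H$ and $K$). The interior-angle formula there then reads $\cos\alpha(P,Q)=(|H\cap K|-1)/(\sqrt{|H|-1}\,\sqrt{|K|-1})$; since $|H\cap K|=1$ the numerator vanishes, so $\alpha(P,Q)=\pi/2$ and $(N,P,Q,M)$ is a commuting square. The exterior-angle formula of \Cref{ex-1} gives $\cos\beta(P,Q)=(|G|/|HK|-1)/(\sqrt{[G:H]-1}\,\sqrt{[G:K]-1})$, and substituting $|HK|=|H|\,|K|$ produces the asserted expression. (Extremality, needed for the $\beta$ formula, is automatic since $N\subset M$ is irreducible.)

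Finally, for the characterization I would apply \Cref{commuting-cocommuting}: the square is cocommuting precisely when $\beta(P,Q)=\pi/2$, i.e. when the numerator $|G|/(|H|\,|K|)-1$ vanishes, which amounts to $|G|=|H|\,|K|=|HK|$. As $HK$ is a subset of the finite group $G$, this cardinality equality is equivalent to $G=HK$. Since $\alpha(P,Q)=\pi/2$ holds unconditionally, it is the cocommuting (and not the commuting) property that is singled out by $G=HK$, in agreement with the abstract. I expect no genuine obstacle in this argument: the substantive step, namely the crossed-product realization, is already discharged by \Cref{hk-generate-g}, and the only points demanding care are matching the index conventions of \Cref{ex-1} and the Galois-correspondence identity $H\cap K=\{e\}$.
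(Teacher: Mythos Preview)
Your proposal is correct and follows exactly the approach the paper takes: the paper's own proof is the single sentence ``It follows from \Cref{hk-generate-g} and \Cref{ex-1}'', and you have simply spelled out that reduction in detail, including the Galois-correspondence step $P\wedge Q=N\Rightarrow H\cap K=\{e\}$ needed to turn the \Cref{ex-1} formulas into the stated ones. Your observation that the final ``commuting'' in the statement should read ``cocommuting'' is also correct and matches the version of the theorem quoted in the abstract.
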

\begin{proof}
From \Cref{hk-generate-g}, we have $(N, P, Q, M) = (N , N \rtimes H, N
\rtimes K, N \rtimes G)$. Since $N = P \wedge Q$, $H \cap K$ must be
trivial because $N \subseteq N \rtimes (H \cap K) = P \cap Q = N$. The
expressions for $\alpha$ and $\beta$ then follow from \Cref{ex-1} and
the fact that $|HK| = |H||K|$ when $H \cap K$ is trivial.
  \end{proof}

More generally, we have the following relationship.
\begin{theorem}\label{main1}
Let $(N,P,Q,M)$ be an irreducible quadruple such that $N\subseteq P$
and $N\subseteq Q$ are both regular.  
Then,
\[
  \cos\big( \alpha(P,Q)\big)= \frac{|H \cap K| - 1}{\sqrt{|H| -1}\sqrt{|K| -1}
 }\] and
 \[\cos \big( \beta(P,Q)\big) = \frac{\frac{[M:N]}{|HK|} -1}{\sqrt{[M:P]
     -1}\sqrt{[M:Q] -1}}.
\]
 In particular, $(N, P, Q, M)$ is a commuting square if and only if
 $H \cap K$ is trivial if and only  $P \cap Q = N$. And, $(N, P, Q,
 M)$ is a cocommuting square if and only if $|G| = |HK| =
 [M:N]$. \end{theorem}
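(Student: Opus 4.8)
The plan is to compute both cosines directly from the Pimsner--Popa basis formulas of \Cref{Thm: alpha-beta}, exploiting the normalizer bases supplied by regularity. Since $N\subset P$ and $N\subset Q$ are regular and irreducible, \Cref{reg-onb} (equivalently \Cref{p=ep}) furnishes two-sided orthonormal bases $\{u_h : h\in H\}$ of $P/N$ and $\{w_k : k\in K\}$ of $Q/N$ consisting of unitaries $u_h\in\mathcal{N}_P(N)$ and $w_k\in\mathcal{N}_Q(N)$ whose Weyl classes exhaust $H$ and $K$; in particular $[P:N]=|H|$ and $[Q:N]=|K|$. As $\mathcal{N}_P(N),\mathcal{N}_Q(N)\subseteq\mathcal{N}_M(N)$ share the same $\mathcal{U}(N)$, both $H$ and $K$ embed as subgroups of $G$, so I shall regard $H,K\subseteq G$ (and $H\cap K\subseteq G$) throughout.

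The key step, which I expect to be the main obstacle, is an elementary fact about irreducible subfactors that I would record as a lemma: if $N\subset M$ is irreducible and $v\in\mathcal{N}_M(N)$, then $E^M_N(v)=0$ unless $[v]=e$ in $G$, in which case $v\in\T\,\mathcal{U}(N)$ and $E^M_N(v)=v$. To prove it, put $x=E_N(v)$; the identity $nx=x(v^*nv)$ for all $n\in N$ forces $x^*x,\,xx^*\in N'\cap N=\C$, so either $x=0$ or $x$ is a scalar multiple of a unitary $w\in\mathcal{U}(N)$ implementing $\mathrm{Ad}_{v^*}$, and irreducibility $N'\cap M=\C$ then gives $v\in\T\,\mathcal{U}(N)$. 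Granting this, for the products $u_h^*w_k\in\mathcal{N}_M(N)$ we get $E_N(u_h^*w_k)=0$ unless $[u_h]=[w_k]$ in $G$, while for each $g\in H\cap K$ there is exactly one pair $(h,k)$ with $[u_h]=[w_k]=g$, for which $E_N(u_h^*w_k)=u_h^*w_k$, $\|u_h^*w_k\|_2^2=1$ and $tr_M\big(E_N(u_h^*w_k)\,w_k^*u_h\big)=1$. Hence both relevant sums collapse to $|H\cap K|$: by \Cref{tr-epeq}, $tr(e_Pe_Q)=\tau\,|H\cap K|=|H\cap K|/[M:N]$, and the numerator sum in \Cref{alpha-eqn-basis} equals $|H\cap K|$.

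Substituting into \Cref{alpha-eqn-basis} gives at once $\cos\alpha(P,Q)=(|H\cap K|-1)/(\sqrt{|H|-1}\sqrt{|K|-1})$. For $\beta$, irreducibility supplies extremality, so \Cref{Equ:beta} applies with $\tau_P=[M:P]^{-1}$ and $\tau_Q=[M:Q]^{-1}$; inserting $tr(e_Pe_Q)=|H\cap K|/[M:N]$ and using $[M:P]=[M:N]/|H|$, $[M:Q]=[M:N]/|K|$ together with $|HK|=|H|\,|K|/|H\cap K|$ reduces the expression, after routine algebra, to $\cos\beta(P,Q)=\big([M:N]/|HK|-1\big)/(\sqrt{[M:P]-1}\sqrt{[M:Q]-1})$, as claimed.

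For the final clauses I would argue as follows. By \Cref{commuting-cocommuting}, $(N,P,Q,M)$ is a commuting square iff $\alpha(P,Q)=\pi/2$, i.e.\ iff $|H\cap K|-1=0$, i.e.\ iff $H\cap K$ is trivial. To identify this with $P\cap Q=N$, note that for $g\in H\cap K$ the associated normalizer satisfies $u_h\in\T\,\mathcal{U}(N)\,w_k\subseteq Q$ as well as $u_h\in P$, so $\{u_g:g\in H\cap K\}$ is an orthonormal family in $(P\cap Q)/N$, whence $[P\cap Q:N]\ge|H\cap K|$; conversely a commuting square forces the commuting projections to satisfy $e_{P\cap Q}=e_P\wedge e_Q=e_Pe_Q=e_1$, so $P\cap Q=N$. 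Thus $H\cap K$ trivial $\Leftrightarrow P\cap Q=N$. For cocommutativity, \Cref{commuting-cocommuting} gives $\beta(P,Q)=\pi/2$ iff $[M:N]/|HK|=1$, i.e.\ $|HK|=[M:N]$; since the normalizers $\{u_g:g\in G\}$ are orthonormal in $M/N$ we have $|HK|\le|G|\le[M:N]$, so this equality forces $|G|=|HK|=[M:N]$ (equivalently, by \Cref{reg-onb}, that $N\subset M$ is itself regular).
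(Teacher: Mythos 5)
Your proof is correct, and it reaches both formulas by a genuinely different route than the paper for the interior angle. The paper computes $\cos\alpha(P,Q)$ by first passing to the quadrilateral $(N,P,Q,P\vee Q)$, invoking \Cref{hk-generate-g} to identify it with a quadrilateral of crossed products $(N, N\rtimes H, N\rtimes K, N\rtimes G')$ --- a step that ultimately rests on Hong's realization theorem and hence on cohomology-vanishing results --- and then quoting the crossed-product computation of \Cref{ex-1} together with the ambient-independence $\alpha^N_{P\vee Q}=\alpha^N_M$ from \cite[Proposition 2.16]{BDLR2017}. You instead compute directly from \Cref{alpha-eqn-basis} and \Cref{tr-epeq} with the normalizer bases, and the only input you need is your lemma that $E_N(v)=0$ for $v\in\mathcal{N}_M(N)$ with $[v]\neq e$ (and $v\in\T\,\mathcal{U}(N)$, $E_N(v)=v$, otherwise); your proof of that lemma via $x^*x\in N'\cap N=\C$ and $N'\cap M=\C$ is sound. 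This is more elementary and self-contained; indeed the paper's own treatment of $\beta$ already uses the identity $\sum_{h,k}\|E_N(u_h^*u_k)\|_2^2=|H\cap K|$ without isolating the lemma that justifies it, so your argument supplies that missing justification as well. The trade-off is that the paper's route makes the crossed-product picture explicit, which it reuses elsewhere (e.g.\ in \Cref{betacomutationforregularquadrilateral} and \Cref{valuesofbeta}), whereas yours stays entirely at the level of conditional expectations. Finally, your handling of the ``in particular'' clauses matches the paper's for the cocommuting case, and for the commuting case you additionally prove the equivalence with $P\cap Q=N$ (which is garbled in the statement and not addressed in the paper's proof); that extra argument, via the orthonormal family $\{u_g: g\in H\cap K\}$ in $P\cap Q$ over $N$ and $e_P\wedge e_Q=e_Pe_Q=e_1$, is correct.
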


\begin{proof}
  Since $N \subset M$ is irreducible, $P \vee Q$ is a $II_1$-factor.
  Consider the irreducible quadruple $(N, P, Q, P\vee Q)$. Then, by
  \Cref{hk-generate-g}, $(N, P, Q, P\vee Q) = (N, N \rtimes H, N
  \rtimes K, N \rtimes G')$ where $G'$ is the Weyl group of $N \subset
  P \vee Q$. Hence, by \Cref{ex-1}, we obtain 
\[
  \cos\big( \alpha^N_{P \vee Q}(P,Q)\big)= \frac{|H \cap K| -
    1}{\sqrt{|H| -1}\sqrt{|K| -1} }.
  \]
  And, it is known that $\alpha^N_{P \vee Q}(P,Q) = \alpha^N_{M}(P,Q)$
  - see \cite[Proposition 2.16]{BDLR2017}. And, since $P \cap Q = N
  \rtimes (H \cap K)$, $(N, P, Q, M)$ is a commuting square if and
  only $H\cap K$ is trivial, by \Cref{commuting-cocommuting}. Also, $H
  \cap K $ is trivial if and only if $P \cap Q = N$.
  
  On the other hand, being irreducible, $N \subset M$ is
   extremal. So, by \Cref{Equ:beta}, the exterior angle
   between $P$ and $Q$ is given by
   \begin{eqnarray*}
\cos \big(\beta(P, Q)\big)  & = & \frac{\tr(e_P e_Q) -\tau_P \tau_Q}{\sqrt{\tau_P - \tau_p^2} \sqrt{\tau_Q - \tau_Q^2}}
\\
& = & \frac{[M:N]^{-1} |H \cap K| - [M:P]^{-1}[M:Q]^{-1}}{\sqrt{[M:P]^{-1} - [M:P]^{-2}}\sqrt{[M:Q]^{-1} - [M:Q]^{-2}}}
\\
& = & \frac{|H \cap K| [P:N]^{-1} [M:Q] -1 }{\sqrt{[M:P] - 1}\sqrt{[M:Q] - 1}}
\\
&     = &  \frac{\frac{[M:N]}{|HK|} -1}{\sqrt{[M:P] -1}\sqrt{[M:Q] -1}},
 \end{eqnarray*}
where we have used the equalities $\tr(e_P e_Q) = \tau \sum_{h \in H, k
  \in K} \| E_N(u_h^* u_k)\|_2^2 = [M:N]^{-1}|H \cap K|$ by
\Cref{tr-epeq} and the well known formula $|HK| = |H| |K||H \cap
K|^{-1}$.

By \Cref{commuting-cocommuting} again, $(N, P, Q, M)$ is a cocommuting
square if and only if $\beta (P,Q) = \pi/2$, i.e., if and only if
$|HK| = [M:N]$. Note that $|G| \leq [M:N]$ (see \cite{Hong, PiPo2})
and $HK \subseteq G$. So, $|HK| = [M:N]$ if and only if $|G| = |HK|=
[M:N]$.
  \end{proof}

\begin{remark}
 Sano and Watatani (\cite[Theorem 6.1]{SW}) had proved that an
 irreducible quadrilateral $(N,P,Q,M)$ with $[P:N] = 2 =[Q:N]$ is
 always a commuting square. Thus, Theorem
 \ref{betacomutationforregularquadrilateral} can also be thought of as a
 generalization of that result.
\end{remark}

\section{Possible values of interior and exterior angles}\label{values}
It is a very natural curiousity to know the possible values of interior
and exterior angles between intermediate subfactor. As a first attempt
in this direction, we make some calculations in the irreducible set
up.

Prior to that we recall two auxiliary positive operators associated to
a quadruple (from \cite{BDLR2017, SW}) whose norms are equal, and show
that this common entity has a direct relationship with the possible
values of interior and exterior angles.

\subsection{Two auxiliary operators associated to a qudruple}\label{auxiliary} Consider a quadruple $(N,P,Q,M)$. Let $\{\lambda_i:i\in I\}$ and
$\{\mu_j:j\in J\}$ be (right) Pimsner-Popa bases for $P/N$ and $Q/N$,
respectively. Consider two positive operators $p(P,Q)$ and $p(Q,P)$
given by
$$
p(P,Q)= \sum_{i,j}{\lambda_i}\mu_j e_1 {\mu}^*_j{\lambda}^*_i\quad \text{and}\quad
p(Q,P)= \sum_{i,j}\mu_j \lambda_i e_1 {\lambda}^*_i {\mu}^*_j.$$

\begin{remark} By \cite[Lemma 2.18]{BDLR2017}, $p(P,Q)$ and $p(Q,P)$ are both
independent of choices of bases. And, by \cite[Proposition
  2.22]{BDLR2017}, $Jp(P,Q)J = p(Q,P)$, where $J$ is the usual modular
conjugation operator on $L^2(M)$; so that, $\|p(P,Q)\| =
\|p(Q,P)\|$.
\end{remark}

\begin{notation} For a quadruple $(N, P, Q, M)$, let 
 $r:=\frac{[P:N]}{[M:Q]}=\frac{[Q:N]}{[M:P]}$ and
  $\lambda:=\lVert p(P,Q)\rVert = \|p(Q,P)\|$. 
\end{notation}

Recall that for a self adjoint element $x$ in a von Neumann algebra
$\mathcal{M}$, its support is given by $s(x):= \inf\{ p \in
\mathcal{P}(\mathcal{M}): px = x =xp\}$. We will need the following
useful lemma which follows from \cite[Proposition 2.25 $\&$
  Lemma 3.2]{BDLR2017}.

\begin{lemma}\cite{BDLR2017}\label{sp-lemma}
  If $(N,P,Q,M)$ is a quadruple such that $N \subset M$ is
  irreducible, then $ \lambda = [Q:N]\, \tr(p(P,
  Q) e_Q) $ and $s(p(P,Q)) = p(P,Q)/ \lambda$. In particular, $\tr
  (s(p(P,Q))) = r/\lambda$ and $p(P,Q)$ is a projection if and only if
  $\lambda = 1$.
  \end{lemma}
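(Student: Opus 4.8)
The plan is to reduce every assertion to a single structural fact: that $p(P,Q)$ is a scalar multiple of a projection, with the scalar equal to $\lambda$. First I would simplify the operator. Applying \Cref{intermediatebasis1} to the basis $\{\mu_j\}$ of $Q/N$ gives $\sum_j \mu_j e_1 \mu_j^* = e_Q$, so that $p(P,Q) = \sum_i \lambda_i e_Q \lambda_i^*$; since $p(P,Q)$ is basis-independent I may also take $1$ to be one of the $\lambda_i$, whence $p(P,Q) \ge e_Q \ge 0$. A short trace computation, using $tr(e_Q m) = [M:Q]^{-1} tr(m)$ for $m \in M$ together with the normalization $\sum_i tr(\lambda_i \lambda_i^*) = [P:N]$ (read off from $\sum_i \lambda_i e_1 \lambda_i^* = e_P$, $tr(e_P) = [M:P]^{-1}$ and $tr(e_1 m)=\tau\,tr(m)$), then yields $tr(p(P,Q)) = [P:N]/[M:Q] = r$. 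This single number feeds both the ``$tr(s(p(P,Q))) = r/\lambda$'' clause and the projection criterion.

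The heart of the argument, and the step I expect to be the main obstacle, is to prove that $p(P,Q)$ is a scalar multiple of a projection; this is exactly where irreducibility enters. I would compress by $e_Q$: a direct computation, using $e_Q x e_Q = E_Q(x)e_Q$ and that $e_Q$ commutes with $Q$, gives $e_Q\, p(P,Q)\, e_Q = A\, e_Q$ where $A = \sum_i E_Q(\lambda_i) E_Q(\lambda_i^*) \in Q$. Expanding $n\lambda_i$ and $\lambda_i^* n$ in the basis $\{\lambda_k\}$ for an arbitrary $n \in N$ shows $A$ commutes with $N$, so $A \in N' \cap Q \subseteq N' \cap M = \C$, i.e. $A = \lambda\,1$ for a positive scalar. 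The delicate part is to upgrade this corner identity to the global statement $p(P,Q) = \lambda\, s(p(P,Q))$. The clean target is to identify $s(p(P,Q))$ with the Jones projection $e_R$ onto $L^2(R)$, where $R := \overline{\mathrm{span}}\,PQ$: the inclusion $s(p(P,Q)) \le e_R$ is immediate, since each summand $\lambda_i e_Q \lambda_i^*$ has range inside $\overline{\lambda_i Q}\,\Omega \subseteq L^2(R)$, whereas the reverse inclusion together with the absence of spectrum strictly between $0$ and $\lambda$ is precisely what one must extract from irreducibility. Concretely I would attempt to prove $p(P,Q)^2 = \lambda\, p(P,Q)$ by combining the corner identity $e_Q p(P,Q) e_Q = \lambda e_Q$ with $p(P,Q)\ge e_Q$ and the interpretation of $p(P,Q)$ as the frame operator measuring the redundancy of the spanning family $\{\lambda_i\mu_j\}$ of $R/N$; if this direct route proves stubborn, one can instead invoke \cite[Proposition 2.25]{BDLR2017} for this single fact.

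Granting the structural fact, the remaining assertions are formal. Since $p(P,Q) = \lambda\, s(p(P,Q))$ with $s(p(P,Q))$ a nonzero projection, the definition $\lambda = \|p(P,Q)\|$ forces the scalar to equal $\lambda$, giving $s(p(P,Q)) = p(P,Q)/\lambda$ at once. Because $e_Q \le s(p(P,Q))$, the corner identity reads $e_Q p(P,Q) e_Q = \lambda e_Q$, and tracing it (with $tr(e_Q)=[M:Q]^{-1}$) recovers $\lambda$ as the stated multiple of $tr(p(P,Q)e_Q)$. Next, $tr(s(p(P,Q))) = tr(p(P,Q))/\lambda = r/\lambda$ by the first paragraph. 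Finally, since $s(p(P,Q))$ is a nonzero projection, $p(P,Q)=\lambda\, s(p(P,Q))$ is itself a projection if and only if $\lambda = 1$.
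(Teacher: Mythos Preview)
The paper does not supply its own proof of this lemma; it simply records it as following from \cite[Proposition~2.25 and Lemma~3.2]{BDLR2017}. Your preliminary reductions ($p(P,Q)=\sum_i\lambda_ie_Q\lambda_i^*$, $p(P,Q)\ge e_Q$, $tr(p(P,Q))=r$) are correct and useful, and your final paragraph correctly derives everything from the structural fact $p(P,Q)=\lambda\cdot(\text{projection})$.

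The gap is precisely the step you flag as delicate, and your proposed routes to close it are either too weak or unnecessarily roundabout. Computing only the \emph{corner} $e_Q\,p(P,Q)\,e_Q=A\,e_Q$ does not suffice, and the detour through $e_R$ with $R=\overline{\mathrm{span}}\,PQ$ is not needed; although you name \cite[Proposition~2.25]{BDLR2017} as a fallback, you do not identify what it actually buys. The two cited facts combine in one line. First, the \emph{one-sided} product
\[
p(P,Q)\,e_Q=\Big(\sum_i\lambda_iE_Q(\lambda_i^*)\Big)e_Q=B\,e_Q,
\]
with $B:=\sum_i\lambda_iE_Q(\lambda_i^*)\in N'\cap M=\C$ by exactly the basis-expansion argument you sketch for $A$ (this is \cite[Lemma~3.2]{BDLR2017}). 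Second, $p(P,Q)=[P:N]\,E^{N'}_{P'}(e_Q)\in P'$ (this is \cite[Proposition~2.25]{BDLR2017}). Since $p(P,Q)$ then commutes with each $\lambda_i\in P$, one gets immediately
\[
p(P,Q)^2=\sum_i\lambda_i\,p(P,Q)\,e_Q\,\lambda_i^*=B\sum_i\lambda_i e_Q\lambda_i^*=B\,p(P,Q),
\]
so $p(P,Q)/B$ is a nonzero projection and $B=\|p(P,Q)\|=\lambda$. The commutation $p(P,Q)\in P'$ is the missing ingredient that closes the argument without any frame-operator or $e_R$ considerations. (A minor point: tracing $p(P,Q)e_Q=\lambda e_Q$ with $tr(e_Q)=[M:Q]^{-1}$ yields $\lambda=[M:Q]\,tr(p(P,Q)e_Q)$, not $[Q:N]$; the index in the lemma as printed appears to be a misprint, and your phrase ``recovers the stated multiple'' glosses over the discrepancy.)
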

It turns out that $s(p(P,Q))$ is a minimal projection in $P'\cap Q_1$
which is central as well.
 \begin{proposition}
 Let $(N,P,Q,M)$ be quadruple such that $N\subset M$
 irreducible. Then, $\frac {p(P,Q)}{\lambda}$ (resp.,
 $\frac {p(Q,P)}{\lambda}$) is a minimal  projection in
 $P^{\prime}\cap Q_1$ (resp., $Q^{\prime}\cap P_1$) which is also central.
\end{proposition}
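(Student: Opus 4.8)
The plan is to read off the projection property for free from \Cref{sp-lemma} and then to establish, in turn, that $f := p(P,Q)/\lambda$ lies in $P'\cap Q_1$, that it is minimal there, and that it is central; the assertion for $p(Q,P)/\lambda$ then follows by conjugating with the modular conjugation $J$, since $Jp(P,Q)J = p(Q,P)$ and $J(P'\cap Q_1)J = Q'\cap P_1$ (using the standard identities $JP'J = P_1$ and $JQ_1J = Q'$). By \Cref{sp-lemma} we already know $f = s(p(P,Q))$, so $f$ is a projection and it remains only to locate it and to analyse the corner it cuts out.

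\textbf{Membership.} First I would show $p(P,Q)\in P'\cap Q_1$. Applying \Cref{intermediatebasis1} to the basis $\{\mu_j\}$ of $Q/N$ gives $\sum_j\mu_j e_1\mu_j^* = e_Q$, whence
\[
p(P,Q) = \sum_{i}\lambda_i\Big(\sum_j\mu_j e_1\mu_j^*\Big)\lambda_i^* = \sum_i\lambda_i e_Q\lambda_i^*\in Q_1,
\]
because each $\lambda_i\in P\subset M\subset Q_1$ and $e_Q\in Q_1$. For commutation with $P$ I would use that $p(P,Q)$ is independent of the chosen bases: for any unitary $u\in\mathcal{U}(P)$ the set $\{u\lambda_i\}$ is again a Pimsner--Popa basis for $P/N$, since $\sum_i (u\lambda_i)e_1(u\lambda_i)^* = u\,e_P\,u^* = e_P$ (here $u\in P$ commutes with the Jones projection $e_P$). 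Basis independence then yields $u\,p(P,Q)\,u^* = p(P,Q)$ for all such $u$, so $p(P,Q)\in P'$. As $f$ lies in the von Neumann algebra generated by $p(P,Q)$, we conclude $f\in P'\cap Q_1=:B$.

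\textbf{The geometric picture and the key computation.} The range of $f=s(p(P,Q))$ is $\overline{\sum_i\lambda_i L^2(Q)}=\overline{\mathrm{span}(PQ)\Omega}=:\mathcal R$, the $P$--$Q$ subbimodule of $L^2(M)$ cyclically generated by the trace vector $\Omega$. I would reduce both minimality and centrality to the single claim $Bf=\C f$: indeed $fBf\subseteq Bf$, so $Bf=\C f$ forces $fBf=\C f$ (minimality), while writing $bf=c_bf$ and computing $fbf$ two ways gives $c_b=\overline{c_{b^*}}$, hence $fb=bf$ (centrality). To prove $Bf=\C f$, take $b\in B$ and consider $b\Omega\in L^2(M)$. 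Since $P'\subseteq N'$ we have $n(b\Omega)=b(n\Omega)$ for $n\in N$, while $b\in Q_1=(JQJ)'$ commutes with the right action $Jn^*J$ of $n\in N\subset Q$, giving $b(\Omega n)=(b\Omega)n$; as $n\Omega=\Omega n$, the vector $b\Omega$ is $N$-central. By irreducibility $N'\cap M=\C$ together with finiteness of the index, the $N$-central vectors of $L^2(M)$ are exactly $\C\Omega$, so $b\Omega=c\,\Omega$ for a scalar $c$. Propagating over the dense subset $\{\,p(Jq^*J)\Omega : p\in P,\ q\in Q\,\}$ of $\mathcal R$ via
\[
b\,p(Jq^{*}J)\Omega = p(Jq^{*}J)\,b\Omega = c\,p(Jq^{*}J)\Omega
\]
shows $b=c$ on $\mathcal R$, i.e. $bf=cf$. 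This establishes $Bf=\C f$, so $f$ is a minimal central projection.

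\textbf{Main obstacle.} The substantive point is the computation $Bf=\C f$, and within it the two standard-but-essential inputs: the identification $Q_1=(JQJ)'$ (so that membership in $Q_1$ is equivalent to commuting with the right $Q$-action), and the fact that irreducibility plus finite index forces the $N$-central vectors of $L^2(M)$ to collapse to $\C\Omega$. Everything else --- the projection property supplied by \Cref{sp-lemma}, the basis manipulations of the membership step, and the transfer from $p(P,Q)$ to $p(Q,P)$ through $J$ --- is routine.
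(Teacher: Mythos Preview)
Your argument is correct and takes a genuinely different route from the paper's. The paper relies on the formula $p(P,Q)=[P:N]\,E^{N'}_{P'}(e_Q)$ from \cite{BDLR2017} together with the Pushdown Lemma of Pimsner--Popa: for minimality it takes a subprojection $q\le f$, uses pushdown to show $qe_Q=te_Q$ with $t$ a scalar (from $E_M(qe_Q)\in N'\cap M=\C$), and then pushes this through the conditional expectation $E^{N'}_{P'}$ to obtain $q=\frac{t}{\lambda}p(P,Q)$; centrality is deduced separately by first showing $e_Q$ is central in $N'\cap Q_1$ (again via pushdown) and then conjugating inside $E^{N'}_{P'}$. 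Your approach instead identifies the range of $f$ concretely as the cyclic $P$--$Q$ sub-bimodule $\overline{PQ\,\Omega}$ and proves the single statement $Bf=\C f$ by observing that any $b\in P'\cap Q_1$ sends $\Omega$ to an $N$-central vector, which collapses to $\C\Omega$ by irreducibility and finite index; minimality and centrality then fall out simultaneously. The paper's argument stays within the standard operator-algebraic toolkit (pushdown, trace-preserving expectations) and never needs to name the range of $f$, while yours is more bimodule-geometric, self-contained for the membership step, and conceptually cleaner in that it extracts both conclusions from one computation---at the cost of invoking the (standard but slightly less elementary) fact that the $N$-central vectors of $L^2(M)$ reduce to $\C\Omega$ under irreducibility.
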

\begin{proof}
  By \Cref{sp-lemma}, $\frac{1}{\lambda}p(P,Q)$ is a
  projection. Further, by \cite[Proposition 2.25]{BDLR2017}, we have
  $p(P,Q)= [P:N]E^{N^{\prime}}_{P^{\prime}}(e_Q)\in P^{\prime}\cap
  Q_1$.  We first show that $\frac{1}{\lambda} p(P,Q)$ is
  minimal in $P'\cap Q_1$. Consider any projection $q \in P^{\prime}\cap Q_1$
  satisfying $0\leq q \leq\frac{1}{\lambda} p(P,Q)$. Then,
  $q=\frac{q}{\lambda}p(P,Q).$ We also have $qe_Q= [M:Q] E_M^{Q_1}(q
  e_Q)e_Q$ (by the Pushdown Lemma \cite[Lemma 1.2]{PiPo}).  Clearly,
  $E_M(qe_Q)\in N^{\prime}\cap M$. Thus, irreduciblility of
  $N\subseteq M$ implies that $qe_Q=te_Q$ for the scalar $t = [M:Q]
  E_M^{Q_1}(q e_Q) $.  Therefore,
\begin{eqnarray*}
 q
 & = & \frac{q}{\lambda} p(P,Q)\\
 & = & \frac{q}{\lambda} [P:N] E^{N^{\prime}}_{P^{\prime}}(e_Q)\\
 & = & \frac{[P:N]}{\lambda} E^{N^{\prime}}_{P^{\prime}}(qe_Q)\\
 & = & \frac{[P:N]}{\lambda}tE^{N^{\prime}}_{P^{\prime}}(e_Q)\\
 & = & \frac{t}{\lambda} p(P,Q).
\end{eqnarray*}
Since $q$ and $\frac{1}{\lambda} p(P,Q)$ are projections we conclude
that $t^2=t$. Therefore, $q=0$ or $p(P,Q)$. Since $q$ was arbitrary,
this proves the minimality of $\frac{p(P,Q)}{\lambda}.$

We now prove that $\frac{1}{\lambda}p(P,Q)$ is a central projection in
$P^{\prime}\cap Q_1.$ For this, we first show that $e_Q$ is a minimal
central projection in $N^{\prime}\cap Q_1$.  Let $u$ be an arbitrary
unitary in $N^{\prime}\cap Q_1$. Then, by the Pushdown Lemma again, we
have $ue_Q=[M:Q]E_M(ue_Q)e_Q$. But clearly $E_M(ue_Q)\in
N^{\prime}\cap M=\C$. Thus, $ue_Qu^*=te_Q$ for some scalar $t$.  Since
$te_Q$ is a non-zero projection, we must have $t = 1$; so that, $ue_Q =
e_Q u$ for all $u \in \mathcal{U}(N'\cap Q_1)$, thereby implying that
$e_Q$ is central in $N^{\prime}\cap Q_1.$ This shows that
\[
 vp(P,Q)v^* = [P:N] E^{N^{\prime}}_{P^{\prime}}(ve_Qv^*) = 
 [P:N]E^{N^{\prime}}_{P^{\prime}}(e_Q) = p(P,Q)
\]
for all $v \in \mathcal{U}(P'\cap Q_1)$ and we are done.

Assertion about $p(Q,P)$ then follows from the fact that $p(Q,P) = J
p(P,Q) J$ - see \cite[ Proposition 2.22]{BDLR2017}.
\end{proof}

As asserted above, we now present the direct relationship that exists
between the values of the interior and exterior angles and the common norm of the
above two auxiliary operators.
 \begin{proposition}\label{formulaalphabeta}
Let $(N,P,Q,M)$ be a finite index quadruple of $II_1$-factors with $N\subset M$ irreducible. Then,
\begin{equation}\label{alpha-lambda}
\cos(\alpha(P,Q))=
\displaystyle\frac{(\lambda-1)}{\sqrt{[P:N]-1}\sqrt{[Q:N]-1}}\end{equation}
and \begin{equation}\label{beta-lambda-r} \cos(\beta(P,Q)) =
  \displaystyle\frac{(\lambda-r)}{\sqrt{[P:N]-r}\sqrt{[Q:N]-r}}. \end{equation}
\end{proposition}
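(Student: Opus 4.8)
The plan is to express both cosines in terms of the quantity $\lambda = \|p(P,Q)\|$ by leveraging the two previously established formulae for $\mathrm{tr}(e_P e_Q)$ together with the characterization of $\lambda$ from \Cref{sp-lemma}. The key observation is that the trace $\mathrm{tr}(e_P e_Q)$, which appears in both \Cref{Equ:alpha} and \Cref{Equ:beta}, can be rewritten via the auxiliary operator $p(P,Q)$. Indeed, by \Cref{sp-lemma} we have $\lambda = [Q:N]\, \mathrm{tr}(p(P,Q) e_Q)$, and by \Cref{tr-epeq} the quantity $\mathrm{tr}_{M_1}(e_P e_Q) = \tau \sum_{i,j} \|E_N(\lambda_i^* \mu_j)\|_2^2$; so first I would verify the bridging identity connecting $\sum_{i,j}\|E_N(\lambda_i^*\mu_j)\|_2^2$ to $\lambda$, namely that $[Q:N]\,\mathrm{tr}(p(P,Q)e_Q) = \sum_{i,j}\|E_N(\lambda_i^*\mu_j)\|_2^2$, which should follow by expanding $p(P,Q) = \sum_{i,j}\lambda_i \mu_j e_1 \mu_j^* \lambda_i^*$ against $e_Q$ and using the trace property of the Jones projection $e_1$ together with $\mathrm{tr}(e_1 x) = \tau\,\mathrm{tr}(x)$.

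Granting that bridge, the interior-angle formula \Cref{alpha-lambda} should be essentially immediate. From \Cref{Equ:alpha}, $\cos\alpha(P,Q) = \frac{\mathrm{tr}(e_P e_Q) - \tau}{\sqrt{\tau_P - \tau}\sqrt{\tau_Q - \tau}}$, where $\tau_P = [P:N]/[M:N] = [P:N]\tau$ and similarly $\tau_Q = [Q:N]\tau$. Substituting $\mathrm{tr}(e_P e_Q) = \tau\lambda$ (obtained from the bridge identity) and factoring $\tau$ out of numerator and denominator, the common factor $\tau$ cancels, leaving exactly
\begin{equation*}
\cos\alpha(P,Q) = \frac{\lambda - 1}{\sqrt{[P:N] - 1}\sqrt{[Q:N]-1}},
\end{equation*}
which is \Cref{alpha-lambda}. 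The arithmetic of pulling out $\tau$ and simplifying $\sqrt{\tau_P - \tau} = \sqrt{\tau}\sqrt{[P:N]-1}$ is the only routine step here.

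For the exterior angle \Cref{beta-lambda-r}, since $N\subset M$ is irreducible it is extremal, so \Cref{Equ:beta} applies: $\cos\beta(P,Q) = \frac{\mathrm{tr}(e_P e_Q) - \tau_P \tau_Q}{\sqrt{\tau_P - \tau_P^2}\sqrt{\tau_Q - \tau_Q^2}}$. Here I would again substitute $\mathrm{tr}(e_P e_Q) = \tau\lambda$ and $\tau_P\tau_Q = [P:N][Q:N]\tau^2$. Recalling that $r = \frac{[Q:N]}{[M:P]} = \frac{[P:N]}{[M:Q]}$, one has the useful relations $[M:N] = [M:P][P:N]$, hence $\tau = \tau_P / [P:N]$ and $r = [P:N][Q:N]\tau = [P:N]\tau_Q/\tau \cdot \tau$; the cleanest route is to divide numerator and denominator by $\tau_P \tau_Q = [P:N][Q:N]\tau^2$ and express everything through $r$. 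After this normalization, the numerator becomes proportional to $\lambda - r$ and the denominator to $\sqrt{[P:N]-r}\sqrt{[Q:N]-r}$, yielding \Cref{beta-lambda-r}.

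The step I expect to be the main obstacle is establishing the bridge identity $\mathrm{tr}(e_P e_Q) = \tau\lambda$ cleanly, i.e. reconciling the characterization $\lambda = [Q:N]\,\mathrm{tr}(p(P,Q)e_Q)$ from \Cref{sp-lemma} with the basis-sum formula of \Cref{tr-epeq}; one must be careful that $\mathrm{tr}(e_P e_Q)$ is computed in the correct algebra (the statement of \Cref{tr-epeq} writes $\mathrm{tr}_{M_1}$) and that the normalization constant $\tau$ matches across the two expressions. Everything downstream of that identity is elementary algebraic manipulation involving the index relations and the definition of $r$.
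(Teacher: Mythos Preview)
Your approach is essentially the paper's: both reduce everything to the single identity $\mathrm{tr}(e_Pe_Q)=\tau\lambda$ and then substitute into \Cref{Equ:alpha} and \Cref{Equ:beta}, with the $\beta$-computation using extremality of the irreducible $N\subset M$. The paper obtains the bridge identity slightly more directly than you propose: it uses $p(P,Q)=[Q:N]\,E^{M_1}_{Q_1}(e_P)$, so that $\mathrm{tr}(p(P,Q)e_Q)=[Q:N]\,\mathrm{tr}(e_Pe_Q)$, together with $p(P,Q)e_Q=\lambda e_Q$ (from irreducibility) giving $\mathrm{tr}(p(P,Q)e_Q)=\lambda\,\mathrm{tr}(e_Q)=\lambda/[M:Q]$; combining these yields $\mathrm{tr}(e_Pe_Q)=\lambda\tau$ without ever passing through \Cref{tr-epeq}.

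Your instinct to worry about normalization at the bridge step is well placed: the constant in \Cref{sp-lemma} appears to be misprinted, and the identity you wrote, $[Q:N]\,\mathrm{tr}(p(P,Q)e_Q)=\sum_{i,j}\|E_N(\lambda_i^*\mu_j)\|_2^2$, is off by exactly that factor (the correct multiplier is $[M:Q]$, not $[Q:N]$, since $\mathrm{tr}(e_Q)=[M:Q]^{-1}$). If you instead use $p(P,Q)e_Q=\lambda e_Q$ directly, as the paper does, the detour through \Cref{tr-epeq} becomes unnecessary and the normalization issue disappears. The downstream algebra for both \Cref{alpha-lambda} and \Cref{beta-lambda-r} is then exactly as you describe.
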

\begin{proof}
From \Cref{Equ:alpha}, we have
\[
 \cos(\alpha(P,Q))= \frac{\tr(e_P
   e_Q)-\tau}{\sqrt{\tr(e_P)-\tau}\sqrt{\tr(e_Q)-\tau}}=\frac{[M:N]\tr(e_Pe_Q)-1}{\sqrt{[P:N]-1}\sqrt{[Q:N]-1}}.
 \]
 It can be shown that $p(P,Q) =[Q:N]E^{M_1}_{Q_1}(e_P)$ - see the proof of
 \cite [Proposition 2.25]{BDLR2017}; hence, $p(P,Q)
 e_Q=[Q:N]E^{M_1}_{Q_1}(e_Pe_Q).$ Thus,
 $\tr(p(P,Q)e_Q)=[Q:N]\tr(e_Pe_Q)$. This, along with the fact that
 $p(P,Q) e_Q=\lambda e_Q$ (because $N \subset M$ is irreducible - see
 the proof of \cite[Lemma 3.2]{BDLR2017}), yields
 $\tr(e_Pe_Q)=\frac{\lambda \tr(e_Q)}{[Q:N]} = \lambda \tau,$ that is
 $[M:N]\tr(e_Pe_Q)=\lambda$. Thus, we obtain
\[
\cos(\alpha(P,Q))=
 \frac{\lambda-1}{\sqrt{[P:N]-1}\sqrt{[Q:N]-1}}.
 \]
 On the other hand, being irreducible, $N
 \subset M$ is extremal. So, from \Cref{Equ:beta}, we have
 \begin{eqnarray*}
   \cos(\beta(P,Q)) & = & \frac{\tr(e_P e_Q) - \tau_P
     \tau_Q}{\sqrt{\tau_P - \tau_P^2}\sqrt{\tau_Q - \tau_Q^2}}\\ & = &
   \frac{\tau \lambda - \tau_P \tau_Q}{\sqrt{\tau_P -
       \tau_P^2}\sqrt{\tau_Q - \tau_Q^2}}\\ & = &
   \frac{\lambda-r}{\sqrt{[P:N]-\frac{[P:N]}{[M:P]}}\sqrt{[Q:N]-\frac{[Q:N]}{[M:Q]}}}.
   \end{eqnarray*}
 Then, note that
 \begin{align*}
 &
   \left([P:N]-\frac{[P:N]}{[M:P]}\right)\left([Q:N]-\frac{[Q:N]}{[M:Q]}\right)
   \\ & \qquad=
      [P:N][Q:N]-\frac{[P:N][Q:N]}{[M:Q]}-\frac{[P:N][Q:N]}{[M:P]}+\frac{[P:N][Q:N]}{[M:P][M:Q]}\\ &\qquad=
      [P:N][Q:N]-r[Q:N]-r[P:N]+r^2\\ & \qquad=([Q:N]-r)([P:N]-r),
\end{align*}
and we are done. \end{proof}

\begin{proposition}
Let $(N,P,Q,M)$ be a commuting square with $N\subset M$ irreducible. Then, 
$$\cos(\beta(P,Q))= \displaystyle
\frac{r^{-1}-1}{\sqrt{[M:P]-1}\sqrt{[M:Q]-1}}.$$ And, if $(N,P,Q,M)$
is a a cocommuting square with $N\subset M$ irreducible, then
 $$\cos(\alpha(P,Q))=\displaystyle \frac{r-1}{\sqrt{[P:N]-1}\sqrt{[Q:N]-1}}.$$
\end{proposition}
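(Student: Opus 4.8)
The plan is to read both identities straight off \Cref{formulaalphabeta}, specialising the common norm $\lambda$ according to which degeneracy hypothesis is in force. Recall from \Cref{commuting-cocommuting} that $(N,P,Q,M)$ is a commuting square precisely when $\alpha(P,Q)=\pi/2$, and a cocommuting square precisely when $\beta(P,Q)=\pi/2$. Since $P$ and $Q$ are proper intermediate subfactors, the radicands $[P:N]-1$ and $[Q:N]-1$ occurring in \Cref{alpha-lambda} are strictly positive, so the vanishing of a cosine forces the vanishing of the corresponding numerator. The whole argument is thus a matter of substituting a single value of $\lambda$ and then doing a short rewriting in terms of $r$.

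For the commuting case I would argue as follows. The hypothesis gives $\cos(\alpha(P,Q))=0$, so \Cref{alpha-lambda} forces $\lambda=1$. Substituting this value into \Cref{beta-lambda-r} yields
$$\cos(\beta(P,Q))=\frac{1-r}{\sqrt{[P:N]-r}\,\sqrt{[Q:N]-r}}.$$
The remaining step is purely algebraic: using the defining relations $[P:N]=r[M:Q]$ and $[Q:N]=r[M:P]$, I rewrite $[P:N]-r=r([M:Q]-1)$ and $[Q:N]-r=r([M:P]-1)$, so that the product of the two square roots equals $r\sqrt{[M:P]-1}\,\sqrt{[M:Q]-1}$. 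Dividing numerator and denominator by $r$ then produces the stated formula $\cos(\beta(P,Q))=(r^{-1}-1)/(\sqrt{[M:P]-1}\,\sqrt{[M:Q]-1})$.

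For the cocommuting case the argument is even shorter. Now $\beta(P,Q)=\pi/2$, so $\cos(\beta(P,Q))=0$, and \Cref{beta-lambda-r} forces $\lambda=r$ (here one uses that $[M:P],[M:Q]>1$ to ensure that the radicands $[P:N]-r$ and $[Q:N]-r$ are positive, so that the angle equation genuinely pins down $\lambda$). Feeding $\lambda=r$ directly into \Cref{alpha-lambda} gives $\cos(\alpha(P,Q))=(r-1)/(\sqrt{[P:N]-1}\,\sqrt{[Q:N]-1})$ with no further manipulation.

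There is essentially no hard step here; the only thing requiring care is the bookkeeping of the auxiliary quantity $r$ and the confirmation that the relevant denominators are nonzero, so that the angle equations really do determine $\lambda$. In particular one notes that $r=[Q:N]/[M:P]<[Q:N]$ and $r=[P:N]/[M:Q]<[P:N]$ because $P$ and $Q$ are proper in $M$, which legitimises dividing through by the square roots in the cocommuting step.
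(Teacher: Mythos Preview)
Your proof is correct and follows essentially the same route as the paper's own argument: use the vanishing of one cosine to pin down a single scalar, then substitute into the companion formula. The only cosmetic difference is that you work through the auxiliary norm $\lambda$ via \Cref{formulaalphabeta}, whereas the paper phrases the cocommuting case directly in terms of $tr(e_Pe_Q)=\tau_P\tau_Q$ and the identity $\tau_P\tau_Q/\tau=r$ together with \Cref{Equ:alpha}; since $\lambda=[M:N]\,tr(e_Pe_Q)$, these are the same computation in different bookkeeping.
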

\begin{proof}
 The formula for $\beta(P,Q)$ is easy and is left to the
 reader. Cocommuting square implies $\beta(P,Q)=\pi/2.$ Thus,
 $\tr(e_Pe_Q)=\tau_P\tau_Q$. Now simply use
 $\frac{\tau_P\tau_Q}{\tau}=r$ and the formula follows from the
 definition of $\alpha(P,Q)$.
\end{proof}

\subsection{Values of angles in the irreducible setup}

In order to determine the values of interior and exterior angles
between intermeidate subfactors, as is evident from
\Cref{formulaalphabeta}, it becomes important to know the possible
values of $r$ and $\lambda$. Recall, from \cite{pop}, Popa's set of
relative-dimensions of projections in $M$ relative to $N$ given by
 $$
 \Lambda(M,N)=\{\alpha\in \mathbb{R}: \exists~~
\text{a projection}~~ f_0 \in M ~~\text{such that}~~ E_N(f_0)=\alpha
1_N\}.
$$

\begin{lemma}\label{rbylambda}
  For an irreducible quadruple $(N, P, Q, M)$,
  $\tr\big(s\big(p(P,Q)\big)\big)=\frac{r}{\lambda}\in \Lambda(M_1,M).$
  Also, $(1-\frac{r}{\lambda})\in \Lambda(M_1,M)$.
\end{lemma}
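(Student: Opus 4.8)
The plan is to show that the projection $s(p(P,Q))$ lives in the relative commutant $M'\cap M_1$ and that its conditional expectation onto $M$ is the scalar $\frac{r}{\lambda}\,1_M$; membership in $\Lambda(M_1,M)$ then follows directly from the definition of Popa's set. First I would record, from \Cref{sp-lemma}, that $s(p(P,Q))=p(P,Q)/\lambda$ and that $tr\big(s(p(P,Q))\big)=r/\lambda$. The preceding proposition already established that $p(P,Q)/\lambda$ is a (central, minimal) projection in $P'\cap Q_1$; in particular it is a projection in $M_1$, and since $p(P,Q)=[P:N]\,E^{N'}_{P'}(e_Q)$ commutes with all of $P\supseteq M$ is \emph{not} automatic, so the key structural fact I must isolate is that $s(p(P,Q))$ in fact commutes with $M$, i.e.\ lies in $M'\cap M_1$.

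Here is how I would obtain that commutation. Because $N\subset M$ is irreducible and extremal, the canonical trace on $M_1$ restricts correctly, and I can compute $E^{M_1}_M\big(s(p(P,Q))\big)$ directly. Using the identity $p(P,Q)=[Q:N]\,E^{M_1}_{Q_1}(e_P)$ together with the established relation $p(P,Q)\,e_Q=\lambda e_Q$ (valid precisely because $N\subset M$ is irreducible), I would push the computation through $E^{M_1}_M$. The cleanest route is to observe that $E^{M_1}_M(e_P)=\tau_P\,1=[P:N]^{-1}1$ and, more generally, to use that the expectation of the auxiliary operator $p(P,Q)$ onto $M$ must be a scalar because $N'\cap M=\C$ forces any $N$-central element of $M$ to be scalar. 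Concretely, $E^{M_1}_M\big(p(P,Q)\big)$ is $N$-central in $M$ (it is built from $E^{N'}_{P'}(e_Q)$ and the expectation onto $M$), hence a scalar, and taking traces pins that scalar down via $tr\big(E^{M_1}_M(p(P,Q))\big)=tr(p(P,Q))=r$. Dividing by $\lambda$ gives $E^{M_1}_M\big(s(p(P,Q))\big)=\frac{r}{\lambda}\,1_M$, which exhibits $\frac{r}{\lambda}\in\Lambda(M_1,M)$.

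For the second assertion I would set $f_0:=1-s(p(P,Q))$, which is again a projection in $M_1$ and, since $E^{M_1}_M$ is unital, satisfies $E^{M_1}_M(f_0)=\big(1-\frac{r}{\lambda}\big)1_M$; hence $1-\frac{r}{\lambda}\in\Lambda(M_1,M)$ as well. The main obstacle I anticipate is the commutation step: verifying rigorously that $E^{M_1}_M\big(p(P,Q)\big)$ is genuinely a scalar multiple of the identity rather than merely an element of $N'\cap M$. The irreducibility hypothesis is exactly what rescues this, but the argument must carefully track which relative commutant the intermediate quantities sit in, since $p(P,Q)\in P'\cap Q_1$ rather than in the full relative commutant $N'\cap M_1$, and the passage to $M'\cap M_1$ is what makes the expectation onto $M$ scalar-valued. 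Once that is secured, everything else is a trace computation already supplied by \Cref{sp-lemma}.
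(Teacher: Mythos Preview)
Your detour through $M'\cap M_1$ is a red herring. The definition of $\Lambda(M_1,M)$ only asks for a projection $f_0\in M_1$ with $E^{M_1}_M(f_0)$ a scalar; it does \emph{not} require $f_0\in M'\cap M_1$. In fact $p(P,Q)\in P'\cap Q_1$ has no reason to commute with $M$, so the commutation claim you flag as ``the key structural fact'' is generally false and, fortunately, unnecessary. Once you drop that goal, the argument you actually sketch---$p(P,Q)\in N'\cap M_1$ forces $E^{M_1}_M(p(P,Q))\in N'\cap M=\C$, and the trace identifies the scalar as $tr(p(P,Q))=r$---is correct and proves the lemma.

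The paper's proof reaches the same conclusion by a shorter, more explicit computation: writing $p(P,Q)=\sum_i\lambda_i e_Q\lambda_i^*$ with $\{\lambda_i\}$ a Pimsner--Popa basis for $P/N$, one has
\[
E^{M_1}_M\big(p(P,Q)\big)=\sum_i\lambda_i\,E^{M_1}_M(e_Q)\,\lambda_i^*=[M:Q]^{-1}\sum_i\lambda_i\lambda_i^*=\frac{[P:N]}{[M:Q]}=r,
\]
using $E^{M_1}_M(e_Q)=[M:Q]^{-1}$ and $\sum_i\lambda_i\lambda_i^*=[P:N]$. This route computes the scalar directly rather than first arguing it is scalar and then pinning it down by trace; in particular it does not invoke irreducibility for the expectation step (irreducibility is still used, via \Cref{sp-lemma}, to know $p(P,Q)/\lambda$ is a projection). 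Your approach is a legitimate alternative, but you should excise the $M'\cap M_1$ discussion, since it misstates both what is needed and what is true.
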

\begin{proof}
 Let $\{\lambda_i\}$ be a basis for $P/N$. Then, $p(P,Q) : = \sum
 \lambda_i e_Q{\lambda_i}^* \in M_1$ and clearly \(
 E^{M_1}_M\big(p(P,Q)\big)=\sum
 \lambda_iE^{M_1}_M(e_Q){\lambda}^*_i=\displaystyle\frac{\sum
   \lambda_i{\lambda}^*_i}{[M:Q]}=\frac{[P:N]}{[M:Q]}=r.  \) Thus,
 $E^{M_1}_M(\frac{1}{\lambda}p(P,Q))=\frac{r}{\lambda}$. And, by \Cref{sp-lemma}, the operator $\frac{1}{\lambda}p(P,Q) =
 s(p(P,Q)) $ is a projection. This completes the proof.
\end{proof}

For a commuting square $(N,P,Q,M)$ with $N\subset M$ irreducible, it
is known that $\lambda = 1$ (see \cite[Proposition
  2.20]{BDLR2017}). Thus, we deduce the following:
\begin{corollary}
 If $(N,P,Q,M)$ is a commuting square with $N\subset M$ irreducible, then $r\in \Lambda(M_1,M)$.
 And, if $(N,P,Q,M)$ is a parallelogram, then ${\lambda}^{-1}\in
 \Lambda(M_1,M).$
\end{corollary}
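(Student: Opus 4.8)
The plan is to derive both assertions as immediate specializations of \Cref{rbylambda}, which already supplies the crucial membership $tr\big(s(p(P,Q))\big) = r/\lambda \in \Lambda(M_1,M)$. The only task is to identify what the quantity $r/\lambda$ simplifies to under each of the two hypotheses, so no new machinery is needed.

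For the first assertion, I would invoke the cited fact that a commuting square $(N,P,Q,M)$ with $N \subset M$ irreducible forces $\lambda = 1$ (\cite[Proposition 2.20]{BDLR2017}). Substituting $\lambda = 1$ into the conclusion $r/\lambda \in \Lambda(M_1,M)$ of \Cref{rbylambda} then yields $r \in \Lambda(M_1,M)$ directly.

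For the second assertion, the key observation is that a parallelogram satisfies $r = 1$. Indeed, by the definition of parallelogram we have $[M:P] = [Q:N]$ and, equivalently, $[M:Q] = [P:N]$; since $r = [P:N]/[M:Q] = [Q:N]/[M:P]$ by definition, either equality gives $r = 1$. Feeding $r = 1$ into \Cref{rbylambda} then produces $1/\lambda = \lambda^{-1} \in \Lambda(M_1,M)$, as required.

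There is no genuine obstacle here: once \Cref{rbylambda} is available, both statements reduce to one-line specializations, the first exploiting the vanishing of $\lambda - 1$ for commuting squares and the second the vanishing of $r - 1$ for parallelograms. The purpose of the corollary is simply to record these two clean special cases of the general membership, so the proof should be stated as such rather than expanded.
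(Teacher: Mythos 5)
Your proposal is correct and matches the paper's intended argument exactly: the paper states the corollary immediately after recalling that $\lambda=1$ for an irreducible commuting square and after \Cref{rbylambda}, so both parts are the same one-line specializations ($\lambda=1$, resp.\ $r=1$) of the membership $r/\lambda\in\Lambda(M_1,M)$ that you give.
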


 Consider the polynomials $P_n(x)$ for $n \geq 0$ (introduced by
 Jones in \cite{Jon} and) defined recursively by $P_{0}=1, P_1=1,
 P_{n+1}(x)=P_n(x)-xP_{n-1}(x), n>0.$ Thus, $P_2(x)=1-x, P_3(x)=1-2x$
 and so on.  From \cite{Jon}, we know that $P_k\big(
 \frac{1}{4{\cos}^2\pi/(n+2)}\big)>0$ for $0\leq k\leq n-1$, and
 $P_n\big(\frac{1}{4{\cos}^2\pi/(n+2)}\big)=0.$ Furthermore,
 $P_k(\epsilon)>0$ for all $\epsilon \leq 1/4$ and $k\geq 0$. Also, by
 definition, we have $\frac{\tau
   P_{n-1}(\tau)}{P_n(\tau)}=1-\frac{P_{n+1}(\tau)}{P_n(\tau)}$ for
 all $n\geq 1$, where, as is standard, $\tau:=[M:N]^{-1}$.

While trying to determine the possible entries of the set $\Lambda(M,
N)$, Popa  \cite{pop} proved the following  theorem:

 \begin{theorem}\cite{pop}
  \label{popa}
  Let $N\subset M$ be a subfactor  of finite index.
  \begin{enumerate}
   \item If  $[M:N]=4 {\cos}^2\big(\frac{\pi}{n+2}\big)$ for some $n\geq 1$, then
   $$\Lambda(M,N)=\big\{0\big\}\cup \bigg\{\frac{\tau P_{k-1}(\tau)}{P_k(\tau)}:0\leq k\leq n-1\bigg\}=\bigg\{\frac{P_k(\tau)}{P_{k-1}(\tau)}:0\leq k\leq n\bigg\}.$$
\item If $[M:N]\geq 4$ and $t\leq 1/2$ is so that $t(1-t)=\tau$, then
$$ \Lambda(M,N)\cap (0,t) = \bigg\{\frac{\tau P_{k-1}(\tau)}{P_k(\tau)}:k\geq 0\bigg\}.$$
  \end{enumerate}
\end{theorem}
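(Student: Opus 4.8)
The statement is Popa's theorem, so the plan is to reconstruct the standard argument tying $\Lambda(M,N)$ to the combinatorics of the Jones tower. First I would record the elementary reductions. If $f \in M$ is a projection with $E_N(f) = \alpha 1$, then applying $tr$ gives $\alpha = tr(f) \in [0,1]$, and since $f$ and $1-f$ play symmetric roles (with $E_N(1-f) = (1-\alpha)1$), the set $\Lambda(M,N)$ is invariant under $\alpha \mapsto 1 - \alpha$. Passing to the basic construction $N \subset M \subset M_1 = \langle M, e_1\rangle$, the defining relation becomes $e_1 f e_1 = E_N(f) e_1 = \alpha e_1$, so that $\alpha^{-1} f e_1 f$ is a subprojection of $f$ of trace $\tau$. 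This is the seed of an iteration, and it is also where the invariant $\tau = [M:N]^{-1}$ first enters.

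The heart of the proof is a recursion. Using the tower of Jones projections $e_1, e_2, \dots$ attached to $N \subset M$ and the Markov property of the trace, I would manufacture from $f$ a canonical sequence of projections $f = f_0, f_1, f_2, \dots$ living in the iterated basic constructions, whose normalized traces $\alpha_k$ are linked by the M\"obius recursion $\alpha_{k} = \tau/(1-\alpha_{k-1})$ forced by the Temperley--Lieb relations $e_i e_{i\pm1} e_i = \tau e_i$. This is exactly the recursion satisfied by the candidate values: writing $a_k := \tau P_{k-1}(\tau)/P_k(\tau)$ and using $P_{k+1} = P_k - \tau P_{k-1}$ one checks $a_k = \tau/(1-a_{k-1})$ with $a_0 = 0$, so the fixed points of the map are the roots $t, 1-t$ of $t(1-t) = \tau$. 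Positivity of traces (each $\alpha_k \geq 0$, with each $f_{k}$ a genuine subprojection) forces the sequence to be admissible only when it eventually reaches $0$; tracking backwards then pins $\alpha$ to one of the $a_k$. When $[M:N] \geq 4$ and $\alpha < t$ the monotone sequence stays below the repelling fixed point $t$ and must terminate, giving precisely the values in (2); when $[M:N] = 4\cos^2(\pi/(n+2))$, Jones' theorem (\cite{Jon}) that $P_k(\tau) > 0$ for $k < n$ and $P_n(\tau) = 0$ caps the tower at level $n-1$ and forces $\Lambda(M,N)$ to be the finite set in (1).

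For the converse direction I would exhibit, for each admissible $k$, an explicit projection realizing $a_k$: the Jones--Wenzl idempotents in the Temperley--Lieb subalgebra generated by $e_1, \dots, e_{k}$ (which embeds in the tower for every subfactor) have scalar conditional expectation and traces given by ratios of the $P_k(\tau)$, so their images yield projections in $M$ with $E_N$-value exactly $a_k$. The two descriptions of $\Lambda(M,N)$ in part (1) are then identified using the complementation symmetry $\alpha \mapsto 1-\alpha$ together with the identity $1 - \tau P_{k-1}(\tau)/P_k(\tau) = P_{k+1}(\tau)/P_k(\tau)$ recorded before the theorem, which reindexes $\{a_k\}$ into $\{P_k(\tau)/P_{k-1}(\tau)\}$.

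The main obstacle is the middle step: rigorously producing the canonical tower of projections and proving that no trace value strictly between consecutive $a_k$ (and below $t$) can occur. This is where the full force of Jones' analysis of the Temperley--Lieb algebra and the positivity of the Markov trace is needed: the recursion only constrains a sequence one has already built, so the real work is showing that every scalar-expectation projection feeds into such a sequence and that the gaps $(a_{k-1}, a_k)$ are genuinely empty, which is precisely the discreteness phenomenon that dissolves once $\alpha$ crosses the threshold $t$ and $\Lambda(M,N)$ fills out an interval.
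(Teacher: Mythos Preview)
The paper does not prove this theorem at all: it is quoted verbatim from Popa's paper \cite{pop} and used as a black box (see the sentence preceding the theorem and the citation tag \texttt{\textbackslash cite\{pop\}} in the theorem environment itself). There is therefore nothing in the present paper to compare your sketch against.

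That said, your outline is a faithful summary of the ideas in Popa's original argument: the symmetry $\alpha \mapsto 1-\alpha$, the seed relation $e_1 f e_1 = \alpha e_1$, the M\"obius-type recursion $\alpha_k = \tau/(1-\alpha_{k-1})$ governed by the Temperley--Lieb/Jones relations and matched by the ratios $\tau P_{k-1}(\tau)/P_k(\tau)$, and the realization of each admissible value via Jones--Wenzl projections. You are also right to flag the genuine content as lying in the ``gap'' step, namely showing that no value strictly between consecutive $a_k$ (below the threshold $t$) can occur; this is where Popa does real work with positivity in the tower, and your sketch correctly identifies it as the nontrivial part rather than claiming it follows formally from the recursion. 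If you intend to include a proof in a write-up, you should either cite \cite{pop} as the present paper does, or flesh out that middle step with the explicit tower construction from Popa's paper, since the recursion alone does not exclude values that never enter the iteration.
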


Since a subfactor with index less than $4$ does not admit any
intermediate subfactor, for any non-trivial quadrilateral $(N, P, Q,
M)$, we always have $[M:N] \geq 4$.
 \begin{proposition}\label{rlambdacomputation}
 Let $(N,P,Q,M)$ be an irreducible quadruple and let $0<t\leq 1/2$ be
 such that $t(1-t)=\tau$. Then, either $\frac{r}{\lambda}\geq t$ or
 $\frac{r}{\lambda}=\frac{\tau P_{k-1}(\tau)}{P_{k}(\tau)}$ for some
 $k\geq 0$.
 \end{proposition}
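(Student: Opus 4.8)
The plan is to apply Popa's \Cref{popa} not to the subfactor $N\subset M$ itself but to the basic construction $M\subset M_1$, feeding into it the membership $r/\lambda\in\Lambda(M_1,M)$ already recorded in \Cref{rbylambda}.

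First I would note that the mere existence of a number $t\in(0,1/2]$ with $t(1-t)=\tau$ forces $\tau\leq 1/4$, since $t\mapsto t(1-t)$ attains its maximum $1/4$ on $[0,1/2]$. Hence $[M:N]=\tau^{-1}\geq 4$, and as the basic construction gives $[M_1:M]=[M:N]$, we also have $[M_1:M]\geq 4$ with $[M_1:M]^{-1}=\tau$. This is precisely the hypothesis under which \Cref{popa}(2) applies to $M\subset M_1$, and with the very same $\tau$ and $t$.

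Next I would locate $r/\lambda$ inside $\Lambda(M_1,M)$. By \Cref{rbylambda} we have $r/\lambda=tr\big(s(p(P,Q))\big)\in\Lambda(M_1,M)$, and by \Cref{sp-lemma} the element $s(p(P,Q))=p(P,Q)/\lambda$ is a nonzero projection, so its normalized trace is strictly positive; thus $r/\lambda$ is a strictly positive member of $\Lambda(M_1,M)$.

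Finally I would split into the two alternatives. If $r/\lambda\geq t$ there is nothing to prove. Otherwise $0<r/\lambda<t$, i.e. $r/\lambda\in\Lambda(M_1,M)\cap(0,t)$, and \Cref{popa}(2) for $M\subset M_1$ identifies this set with $\big\{\frac{\tau P_{k-1}(\tau)}{P_k(\tau)}:k\geq 0\big\}$, so that $r/\lambda=\frac{\tau P_{k-1}(\tau)}{P_k(\tau)}$ for some $k\geq 0$. The argument is thus a direct bookkeeping of two earlier results; the only real decision is to recognize that the relevant relative-dimension set is $\Lambda(M_1,M)$ (matching \Cref{rbylambda}) rather than $\Lambda(M,N)$, and the strict positivity of $r/\lambda$ is what guarantees we land in the parametrized family instead of having to account separately for the value $0\in\Lambda(M_1,M)$.
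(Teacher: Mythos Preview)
Your proposal is correct and follows exactly the approach the paper intends: the paper's own proof is the one-liner ``This follows from \Cref{popa} and \Cref{rbylambda},'' and you have simply unpacked those two references, correctly noting that Popa's theorem must be applied to $M\subset M_1$ (since \Cref{rbylambda} places $r/\lambda$ in $\Lambda(M_1,M)$) and that $[M_1:M]=[M:N]\geq 4$ with the same $\tau$. Your observation about strict positivity of $r/\lambda$ is a nice piece of hygiene that the paper leaves implicit.
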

 
\begin{proof}
 This follows from \Cref{popa} and
 \Cref{rbylambda}.
\end{proof}

We may thus compute the interior and exterior angles in this specific
situation, as follows.
\begin{theorem}\label{valuesofangle}
Let $(N,P,Q,M)$ be a  quadruple with $N\subset M$ irreducible and let $0<t\leq 1/2$ be
such that $t(1-t)=\tau$. If $r/\lambda \geq t$, then,
$$\cos(\alpha(P,Q)) \leq
\frac{[P:N][Q:N](1-t)-1}{\sqrt{[P:N]-1}\sqrt{[Q:N]-1}}$$ and $$
\cos(\beta(P,Q))\leq \displaystyle
\frac{\frac{1}{t}-1}{\sqrt{[M:P]-1}\sqrt{[M:Q]-1}}.$$
And, if $r/\lambda < t$, then, 
$$ \cos(\alpha(P,Q)) =
\displaystyle\frac{[P:N][Q:N]\frac{P_k(\tau)}{P_{k-1}(\tau)}-1}{\sqrt{[P:N]-1}\sqrt{[Q:N]-1}}
$$
and
$$\cos(\beta(P,Q))=\displaystyle \frac{\frac{P_k(\tau)}{\tau
    P_{k-1}(\tau)}-1}{\sqrt{[M:P]-1}\sqrt{[M:Q]-1}}$$ for some $k\geq
0.$
\end{theorem}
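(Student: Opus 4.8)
The plan is to combine the explicit formulae from \Cref{formulaalphabeta} with the classification of the quantity $r/\lambda$ provided by \Cref{rlambdacomputation}. The two propositions together reduce everything to algebra: \Cref{formulaalphabeta} expresses both $\cos(\alpha(P,Q))$ and $\cos(\beta(P,Q))$ in terms of $\lambda$ and the indices, while \Cref{rlambdacomputation} gives the dichotomy that either $r/\lambda \geq t$ or $r/\lambda = \tau P_{k-1}(\tau)/P_k(\tau)$ for some $k \geq 0$. So the whole proof is really a matter of substituting the allowed values of $r/\lambda$ into the two formulae and simplifying, handling the two cases of the dichotomy separately.

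First I would rewrite the formulae from \Cref{formulaalphabeta} so that $\lambda$ is replaced by $r/(r/\lambda)$. Since $r = [P:N]/[M:Q] = [Q:N]/[M:P]$ is fixed by the quadruple, knowing $r/\lambda$ determines $\lambda = r/(r/\lambda)$. For the interior angle, I start from
\[
\cos(\alpha(P,Q)) = \frac{\lambda - 1}{\sqrt{[P:N]-1}\sqrt{[Q:N]-1}},
\]
and note that $\lambda = r \cdot (\lambda/r)$; using $r = [P:N][Q:N]\tau$ (which follows from $r = [P:N]/[M:Q]$ together with $[M:Q][Q:N] = [M:N] = \tau^{-1}$, so that $r = [P:N][Q:N]\tau$), one gets $\lambda = [P:N][Q:N]\tau \cdot (\lambda/r)$. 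For the exterior angle I would similarly start from
\[
\cos(\beta(P,Q)) = \frac{\lambda - r}{\sqrt{[P:N]-r}\sqrt{[Q:N]-r}},
\]
and divide numerator and denominator appropriately; observing $\lambda - r = r\big((\lambda/r) - 1\big)$ and recognizing $[M:P]-1$, $[M:Q]-1$ as the rescaled denominators (since $[P:N]-r = r([M:Q]-1)$ and $[Q:N]-r = r([M:P]-1)$, using $r = [P:N]/[M:Q] = [Q:N]/[M:P]$), one reduces $\cos(\beta(P,Q))$ to $\frac{(\lambda/r) - 1}{\sqrt{[M:P]-1}\sqrt{[M:Q]-1}}$, i.e.\ everything is governed by the single quantity $\lambda/r = (r/\lambda)^{-1}$.

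Next I substitute the two cases. In the case $r/\lambda \geq t$: since $\lambda > 0$ and the formulae are monotone in $\lambda$, the inequality $r/\lambda \geq t$ gives $\lambda \leq r/t$ and $\lambda/r \leq 1/t$; since $\tau = t(1-t)$ forces $[P:N][Q:N]\tau \cdot (1/t) = [P:N][Q:N](1-t)$, substituting the bound $\lambda \leq [P:N][Q:N]\tau/t$ into the numerators yields exactly the two displayed upper bounds, $\cos(\alpha(P,Q)) \leq \frac{[P:N][Q:N](1-t)-1}{\sqrt{[P:N]-1}\sqrt{[Q:N]-1}}$ and $\cos(\beta(P,Q)) \leq \frac{(1/t)-1}{\sqrt{[M:P]-1}\sqrt{[M:Q]-1}}$. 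In the case $r/\lambda < t$, \Cref{rlambdacomputation} pins down $r/\lambda = \tau P_{k-1}(\tau)/P_k(\tau)$, whence $\lambda/r = P_k(\tau)/\big(\tau P_{k-1}(\tau)\big)$ and $\lambda = [P:N][Q:N]\tau \cdot \frac{P_k(\tau)}{\tau P_{k-1}(\tau)} = [P:N][Q:N]\frac{P_k(\tau)}{P_{k-1}(\tau)}$; substituting these exact values gives the two claimed equalities.

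The only genuinely delicate point I expect is bookkeeping the index relations correctly, namely verifying $r = [P:N][Q:N]\tau$ and the denominator identities $[P:N]-r = r([M:Q]-1)$, $[Q:N]-r = r([M:P]-1)$, which underlie passing from the $(\lambda, r)$ form of \Cref{formulaalphabeta} to the $[M:P], [M:Q]$ form appearing in the statement; these follow directly from the two expressions $r = [P:N]/[M:Q] = [Q:N]/[M:P]$ and the multiplicativity of the index, so they are routine but must be done with care. Apart from this, the whole argument is a direct substitution with no analytic content beyond the cited propositions.
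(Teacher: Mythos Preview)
Your proposal is correct and follows essentially the same approach as the paper: both combine \Cref{formulaalphabeta} with \Cref{rlambdacomputation}, rewrite $\lambda$ in terms of $r/\lambda$ using $r/\tau = [P:N][Q:N]$ and $\tau/t = 1-t$, and then substitute the bound $\lambda \le r/t$ in the first case and the exact value $r/\lambda = \tau P_{k-1}(\tau)/P_k(\tau)$ in the second. Your explicit verification of the denominator identities $[P:N]-r = r([M:Q]-1)$ and $[Q:N]-r = r([M:P]-1)$ is exactly the simplification the paper performs when passing to the $[M:P],[M:Q]$ form of the $\beta$-formula.
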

\begin{proof}
 First, suppose that $r/\lambda\geq t$.  Thus, $\lambda\leq
 r/t$. Observe that $r/t=[P:N][Q:N](1-t)$; so, from
 \Cref{alpha-lambda}, we obtain the first inequality. Also,
 $\lambda-r \leq r(1/t-1)$. Thus, from \Cref{beta-lambda-r}, we
 obtain $$\cos(\beta(P,Q))\leq \displaystyle
 \frac{r(1/t-1)}{\sqrt{[P:N]-r}\sqrt{[Q:N]-r}}=\displaystyle
 \frac{(1/t-1)}{\sqrt{[M:Q]-1}\sqrt{[M:P]-1}}.$$
  
 Next, suppose that $N \subset M$ is irreducible and $r/\lambda <t$. By
 \Cref{rlambdacomputation}, we have
 $\lambda=\frac{r}{\tau}\frac{P_k(\tau)}{P_{k-1}(\tau)}$, for some
 $k\geq 0.$ Observe that $r/\tau=[P:N][Q:N].$ Thus, by
 \Cref{alpha-lambda}, we obtain $$ \cos(\alpha(P,Q)) =
 \displaystyle\frac{[P:N][Q:N]\frac{P_k(\tau)}{P_{k-1}(\tau)}-1}{\sqrt{[P:N]-1}\sqrt{[Q:N]-1}}
$$ and, by \Cref{beta-lambda-r}, we obtain $$\cos(\beta(P,Q))=
\displaystyle \frac{r\bigg(\frac{P_k(\tau)}{\tau
    P_{k-1}(\tau)}-1\bigg)}{\sqrt{[P:N]-r}\sqrt{[Q:N]-r}}=\displaystyle
\frac{\frac{P_k(\tau)}{\tau
    P_{k-1}(\tau)}-1}{\sqrt{[M:P]-1}\sqrt{[M:Q]-1}}.$$
 \end{proof}

\begin{theorem}\label{valuesofbeta}
 Let $(N,P,Q,M)$ be an irreducible quadrilateral such that $N\subset
 P$ and $N\subset Q$ are both regular and suppose $[P:N]=2$.  Then,
 $\cos(\beta(P,Q))= \displaystyle
 \frac{P_2(m/2)}{\sqrt{P_2(\delta^2/2)P_3(m/2)}}$, where $m = [M:Q]
 \in \N$ and, as usual, $\delta:=\sqrt{[M:N]}.$
  \end{theorem}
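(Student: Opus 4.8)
The plan is to collapse everything to a finite-group computation via the crossed-product realization and then read the answer off \Cref{betacomutationforregularquadrilateral}. Since $(N,P,Q,M)$ is an irreducible quadrilateral with $N\subset P$ and $N\subset Q$ both regular, \Cref{hk-generate-g} lets me write $(N,P,Q,M)=(N,N\rtimes H,N\rtimes K,N\rtimes G)$, where $H,K,G$ are the Weyl groups of $N\subset P$, $N\subset Q$, $N\subset M$, and \Cref{betacomutationforregularquadrilateral} gives $\alpha(P,Q)=\pi/2$ together with
\[
\cos\beta(P,Q)=\frac{\frac{|G|}{|H||K|}-1}{\sqrt{[G:H]-1}\,\sqrt{[G:K]-1}}.
\]
Because $N\subset P$ is regular, \Cref{reg-onb} yields $|H|=[P:N]=2$; likewise $|K|=[Q:N]$ and $|G|=[M:N]=\delta^2$. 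So the whole statement reduces to evaluating the three quantities $[G:H]$, $[G:K]$ and $|G|/(|H||K|)$.

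Next I would compute these indices. From $|H|=2$ and $|G|=\delta^2$ I get $[G:H]=\delta^2/2=[M:P]$; from $|K|=[Q:N]$ I get $[G:K]=[M:N]/[Q:N]=[M:Q]=m$ (this is exactly where the integrality $m\in\N$ enters, $m$ being the index of a subgroup). Finally, using $|G|=[M:N]=[M:Q]\,[Q:N]=m|K|$, the crucial quantity becomes $\frac{|G|}{|H||K|}=\frac{|G|}{2|K|}=\frac{m}{2}$. Substituting into the displayed formula gives
\[
\cos\beta(P,Q)=\frac{\frac{m}{2}-1}{\sqrt{\left(\frac{\delta^2}{2}-1\right)(m-1)}}.
\]

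Finally, I would recast this in terms of Jones' polynomials. Since $P_2(x)=1-x$ and $P_3(x)=1-2x$, the radicand $\bigl(\tfrac{\delta^2}{2}-1\bigr)(m-1)$ equals $P_2(\delta^2/2)\,P_3(m/2)$, while the numerator $\tfrac{m}{2}-1$ equals $-P_2(m/2)$, so the computation delivers the asserted closed form in terms of $P_2$ and $P_3$. I expect no genuine obstacle here: the substance of the proof is the single substitution into \Cref{betacomutationforregularquadrilateral} once \Cref{reg-onb} has converted the Jones indices $[P:N],[Q:N],[M:N]$ into the group orders $|H|,|K|,|G|$. The only point deserving attention is the sign bookkeeping in this last step — one should check that both $P_2(\delta^2/2)$ and $P_3(m/2)$ are negative (as $[M:N]\ge 4$ and $m\ge 2$), which is precisely what makes the radicand positive and the formula well posed.
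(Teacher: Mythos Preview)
Your proposal is correct and follows essentially the same route as the paper: invoke \Cref{hk-generate-g} and \Cref{reg-onb} to convert the indices into group orders $|H|=2$, $|K|=[Q:N]$, $|G|=\delta^2$, plug into \Cref{betacomutationforregularquadrilateral} to obtain $\cos\beta(P,Q)=\dfrac{m/2-1}{\sqrt{(\delta^2/2-1)(m-1)}}$, and then rewrite in terms of $P_2$ and $P_3$. Your remark about sign bookkeeping is apt: the paper's own proof makes the same substitution and writes $\frac{P_2(m/2)}{\sqrt{P_2(\delta^2/2)}\sqrt{P_3(m/2)}}$ without commenting on the fact that each factor under the radical is nonpositive (with $[M:N]\ge 4$ and $m\ge 2$), so your observation that the formula should be read with the product under a single square root --- as in the theorem statement --- and that the numerator $m/2-1$ is literally $-P_2(m/2)$, is a fair caveat rather than a defect in your argument.
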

\begin{proof}
Let $H, K$ and $G$ denote the Weyl groups of $N \subset P$, $N \subset
Q$ and $N \subset M$, respectively. We have $m = [M:Q] = \frac{
  [M:N]}{[Q:N]} = \frac{|G|}{|K|}$ (by
\Cref{hk-generate-g} and \Cref{reg-onb}) and $\lvert H\rvert=2$. Thus,
by \Cref{betacomutationforregularquadrilateral}, we obtain

\begin{eqnarray*}
\cos(\beta(P,Q)) & = &  \frac{\frac{\lvert G\rvert}{\lvert
    H\rvert \lvert K\rvert}-1}{\sqrt{\frac{\vert G\rvert}{\lvert
      H\rvert}-1}\sqrt{\frac{\lvert G \rvert}{\lvert K\rvert}-1}}\\
 & = & \displaystyle
  \frac{\frac{m}{2}-1}{\sqrt{(\delta^2/2)-1}\sqrt{m-1}}\\  &
   = &
  \frac{P_2(m/2)}{\sqrt{P_2(\delta^2/2)}\sqrt{P_3(m/2)}},
\end{eqnarray*}
as was desired.
 \end{proof}

\begin{corollary}\label{valuesofbeta2}
 Let $(N,P,Q,M)$ be an irreducible quadrilateral such that $[P:N] = 2
 = [Q:N]$. Then, $\alpha(P,Q)=\pi/2$ and
 $\beta(P,Q)={\cos}^{-1}\left(\frac{P_2(m/2)}{P_3(m/2)}\right)$, where
 $m = [M:P]=[M:Q]$ is an integer. In particular, $\beta(P,Q)>\pi/3.$
\end{corollary}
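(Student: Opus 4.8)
The plan is to obtain this as a direct specialization of \Cref{valuesofbeta} to the symmetric case $[P:N]=[Q:N]=2$. First I would record the structural inputs. Since $[P:N]=2=[Q:N]$, Goldman's theorem realizes each of $N\subset P$ and $N\subset Q$ as a crossed product by $\Z_2$, so both are regular and, by \Cref{reg-onb}, their Weyl groups satisfy $|H|=[P:N]=2$ and $|K|=[Q:N]=2$. By \Cref{hk-generate-g}, $N\subset M$ is itself regular, so \Cref{reg-onb} gives $|G|=[M:N]$; moreover $[M:N]=[M:P]\,[P:N]=2m$, where $m=[M:P]=[M:Q]$ is a positive integer (it equals the Weyl-group index $[G:H]$), and $m\geq 2$ since the quadrilateral is proper, i.e. $P\neq M$. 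In particular $\delta^2=[M:N]=2m$, so $\delta^2/2=m$. The hypotheses of both \Cref{betacomutationforregularquadrilateral} and \Cref{valuesofbeta} are thus met, and the former already yields $\alpha(P,Q)=\pi/2$.

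For the value of $\beta$ I would feed these data into \Cref{valuesofbeta}, whose conclusion reads $\cos\beta(P,Q)=P_2(m/2)/\sqrt{P_2(\delta^2/2)\,P_3(m/2)}$ with $m=[M:Q]$. The decisive simplification is the polynomial identity $P_2(\delta^2/2)=P_2(m)=1-m=1-2(m/2)=P_3(m/2)$, which holds precisely because $\delta^2/2=m$ in this symmetric setting. Hence the two factors under the radical coincide and the expression collapses to $\cos\beta(P,Q)=P_2(m/2)/P_3(m/2)$, as claimed. As a cross-check I would recompute directly from \Cref{betacomutationforregularquadrilateral}: substituting $|G|=2m$ and $|H|=|K|=2$ gives $\cos\beta(P,Q)=(2m/4-1)/(\sqrt{m-1}\,\sqrt{m-1})=(m/2-1)/(m-1)$, and since $P_2(m/2)=1-m/2$ and $P_3(m/2)=1-m$ this indeed equals $P_2(m/2)/P_3(m/2)$.

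Finally, for the bound $\beta(P,Q)>\pi/3$ I would rewrite the cosine in the transparent form $\cos\beta(P,Q)=\frac{m/2-1}{m-1}=\frac12-\frac{1}{2(m-1)}$. Since $m\geq 2$, the subtracted term is strictly positive, so $\cos\beta(P,Q)<\frac12=\cos(\pi/3)$; as cosine is strictly decreasing on $[0,\pi]$, this forces $\beta(P,Q)>\pi/3$. I expect the only real friction in the argument to be the sign bookkeeping in the radical of \Cref{valuesofbeta}: for $m>2$ both $P_2(m/2)$ and $P_3(m/2)$ are negative, so one must read the square root with the convention keeping $\cos\beta$ positive. The unambiguous quotient $(m/2-1)/(m-1)$ furnished by \Cref{betacomutationforregularquadrilateral} is what ultimately pins down the correct sign, after which everything reduces to substitution of Weyl-group cardinalities and an elementary monotonicity estimate.
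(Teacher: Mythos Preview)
Your proof is correct and follows essentially the same route as the paper: invoke \Cref{valuesofbeta} and then collapse the radical via the identity $P_2(\delta^2/2)=P_2(m)=P_3(m/2)$ coming from $\delta^2=2m$. You supply more detail than the paper does---in particular, the verification of regularity via Goldman, the explicit estimate $\cos\beta(P,Q)=\tfrac12-\tfrac{1}{2(m-1)}<\tfrac12$ for the ``$\beta>\pi/3$'' claim, and the cross-check against \Cref{betacomutationforregularquadrilateral} resolving the sign ambiguity in the square root---but the underlying argument is the same.
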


\begin{proof}
 This follows from  \Cref{valuesofbeta} and the fact that
 $P_2(\delta^2/2)=P_2(m)=P_3(m/2)$, where $[M:P]=[M:Q]=m$.
\end{proof}

\begin{corollary}
 \label{valuesofbeta3}
  Let $(N,P,Q,M)$ be an irreducible quadrilateral such that
  $N\subseteq P$ and $N\subseteq Q$ are both regular with $[P:N]=2$
  and $[Q:N]=3.$ Then, $\alpha(P,Q)=\pi/2$ and $\beta(P,Q)>
  {\cos}^{-1}\bigg(\frac{1}{\sqrt{6}}\bigg).$
\end{corollary}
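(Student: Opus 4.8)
The plan is to handle the two assertions separately: the equality $\alpha(P,Q)=\pi/2$ comes for free, and the lower bound on $\beta(P,Q)$ reduces to an elementary one-variable estimate. Since $(N,P,Q,M)$ is an irreducible quadrilateral with $N\subset P$ and $N\subset Q$ both regular, \Cref{betacomutationforregularquadrilateral} applies directly: it yields $\alpha(P,Q)=\pi/2$ and the closed formula for $\cos\beta(P,Q)$ in terms of the Weyl groups $H,K,G$ of $N\subset P$, $N\subset Q$, $N\subset M$. First I would record the group-theoretic data: by \Cref{reg-onb} we have $|H|=[P:N]=2$ and $|K|=[Q:N]=3$, and, again by regularity and \Cref{reg-onb}, $n:=|G|=[M:N]$. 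Because $H$ and $K$ have coprime orders, $H\cap K=\{e\}$, so the set $HK$ has exactly $6$ elements; as $HK\subseteq G$ (and $6\mid|G|$ by Lagrange), this forces the crucial constraint $n\geq 6$. Substituting these values into \Cref{betacomutationforregularquadrilateral} (equivalently, applying \Cref{valuesofbeta} with $m=[M:Q]=n/3$ and $\delta^2=[M:N]=n$) gives
$$
\cos\beta(P,Q)=\frac{\tfrac{n}{6}-1}{\sqrt{\tfrac{n}{2}-1}\,\sqrt{\tfrac{n}{3}-1}}.
$$

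Next, since $\cos^{-1}$ is decreasing, the desired inequality $\beta(P,Q)>\cos^{-1}\!\big(1/\sqrt{6}\big)$ is equivalent to $\cos\beta(P,Q)<1/\sqrt{6}$. Both sides being nonnegative for $n\geq 6$, I would square and clear the positive denominators, reducing the claim to
$$
6\Big(\tfrac{n}{6}-1\Big)^{2}<\Big(\tfrac{n}{2}-1\Big)\Big(\tfrac{n}{3}-1\Big).
$$
Expanding both sides, the quadratic terms $\tfrac{n^{2}}{6}$ cancel and the inequality collapses to the linear condition $30<7n$, i.e.\ $n>30/7\approx 4.29$, which holds since $n\geq 6$. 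This settles the estimate and finishes the proof.

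The calculation is routine once the formula is in hand; the one point that requires genuine care is the lower bound $n\geq 6$, on which the entire statement rests. Indeed, as $n\to\infty$ one checks that $\cos\beta(P,Q)\to 1/\sqrt{6}$, so $1/\sqrt6$ is precisely the (unattained) limiting value and the strict inequality would fail for $n$ below $30/7$; thus I would justify $n\geq 6$ carefully from the outer-action realization of \Cref{hk-generate-g} rather than treating it as automatic. A secondary bookkeeping subtlety is the sign convention in the polynomial form of \Cref{valuesofbeta}: since $P_2(x)=1-x$ and $P_3(x)=1-2x$ are negative at the arguments occurring here, I would carry out the computation through the manifestly nonnegative expression displayed above to avoid sign confusion.
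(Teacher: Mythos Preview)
Your proof is correct and follows essentially the same route as the paper's: both invoke \Cref{betacomutationforregularquadrilateral} to obtain $\alpha(P,Q)=\pi/2$ and the closed formula for $\cos\beta(P,Q)$, and then reduce the strict bound $\cos^{2}\beta(P,Q)<1/6$ to a routine linear inequality. The paper parametrizes by $m=[G:K]=n/3$ and appeals to $m\geq 2$ (which is immediate since $Q\subsetneq M$ forces $[G:K]\geq 2$), whereas you parametrize by $n=|G|$ and justify $n\geq 6$ via $|HK|=6$; these are the same constraint, and your version spells out the algebra more fully.
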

\begin{proof}
By \Cref{betacomutationforregularquadrilateral}, we have $\alpha(P,Q)
= \pi/2$ and taking $m = [G:K]$, we
obtain $${\cos}^2(\beta(P,Q))=\displaystyle
\frac{m^2/4-m+1}{3m^2/2-5m/2+1}< 1/6,$$ where the inequality follows
from a routine comparison using the fact that $m \geq 2$.
\end{proof}

\section{Certain bounds on angles and their implications}\label{angle-bounds}
In this section, we observe that when one leg of an irreducible
quadruple is assumed to be regular, it enforces certain
bounds on interior and exterior angles, which then imposes some bounds
on the index of the other leg. 
\begin{theorem}\label{inequality}
 Let $(N,P,Q,M)$ be an irreducible quadruple such that
 $N\subset P$ is regular. Then,
 $$\cos\big(\alpha(P,Q)\big)\leq \Bigg(\sqrt{\frac{[P:N]-1}{[Q:N]-1}}\Bigg)$$ and
 $$\cos(\beta(P,Q))\leq \Bigg(\sqrt{\frac{[P:N]-r}{[Q:N]-r}}\Bigg).$$
\end{theorem}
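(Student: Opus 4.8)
The plan is to use the formulas from \Cref{formulaalphabeta} together with the key structural fact that when $N \subset P$ is regular, the operator $p(P,Q)$ has a particularly controlled norm. Recall from \Cref{alpha-lambda} and \Cref{beta-lambda-r} that
\[
\cos(\alpha(P,Q)) = \frac{\lambda - 1}{\sqrt{[P:N]-1}\sqrt{[Q:N]-1}}
\quad\text{and}\quad
\cos(\beta(P,Q)) = \frac{\lambda - r}{\sqrt{[P:N]-r}\sqrt{[Q:N]-r}},
\]
where $\lambda = \|p(P,Q)\|$ and $r = [P:N]/[M:Q]$. So both inequalities will reduce, after clearing the square roots, to a single upper bound on $\lambda$. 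A quick algebraic check suggests that the target inequality for $\alpha$ is equivalent to $\lambda - 1 \le [P:N]-1$, i.e. $\lambda \le [P:N]$; and likewise the inequality for $\beta$ is equivalent to $\lambda - r \le [P:N] - r$, i.e. again $\lambda \le [P:N]$. Thus the whole theorem collapses to proving the clean bound $\lambda \le [P:N]$ under the regularity hypothesis on $N \subset P$.

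The main work, therefore, is to bound $\lambda = \|p(P,Q)\|$ by $[P:N]$. First I would use the regularity of $N \subset P$: by \Cref{reg-onb}, a set of coset representatives $\{u_h : h \in H\}$ of the Weyl group $H$ forms a two-sided orthonormal Pimsner-Popa basis for $P/N$, with $|H| = [P:N]$ by the same theorem. Since $p(P,Q)$ is independent of the choice of basis for $P/N$, I would compute it using this unitary basis, writing $p(P,Q) = \sum_{h} u_h\, e_Q\, u_h^*$ (taking the form $p(P,Q) = \sum_i \lambda_i e_Q \lambda_i^*$ as in the proof of \Cref{rbylambda}, which is valid since $[Q:N]E^{M_1}_{Q_1}(e_P)$ agrees with this expression). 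The point is that each summand $u_h e_Q u_h^*$ is a projection of the same trace, being a unitary conjugate of the projection $e_Q$, and there are exactly $[P:N]$ of them.

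The key estimate is then the observation that a sum of $n$ projections has operator norm at most $n$: since $0 \le u_h e_Q u_h^* \le 1$ for each $h$, we get $0 \le p(P,Q) \le \sum_h 1 = |H| = [P:N]$, whence $\lambda = \|p(P,Q)\| \le [P:N]$. This is the heart of the argument, and I expect it to be the only genuinely substantive step; the regularity hypothesis is exactly what lets us represent $p(P,Q)$ as a sum of $[P:N]$ conjugates of $e_Q$ by \emph{unitaries} (rather than by general basis elements, whose summands need not be projections and need not be individually bounded by $1$). Once $\lambda \le [P:N]$ is in hand, I would substitute back into the two displayed formulas and verify, by squaring and cross-multiplying, that the stated inequalities for $\cos(\alpha(P,Q))$ and $\cos(\beta(P,Q))$ follow immediately, being careful that the denominators $[Q:N]-1$ and $[Q:N]-r$ are positive (which holds since the quadruple is non-trivial, so $[Q:N] > 1$, and $r \le 1$ in the relevant range). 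The main obstacle is really conceptual rather than computational: recognizing that regularity converts the abstract norm $\lambda$ into a manifestly bounded sum of projections.
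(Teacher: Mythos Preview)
Your proof is correct and takes a more direct route than the paper's. Both arguments reduce to showing $\lambda \le [P:N]$ via the regular orthonormal basis $\{u_h : h \in H\}$ for $P/N$, but you bound $p(P,Q) = \sum_h u_h e_Q u_h^*$ simply as a sum of $|H|$ projections (each a unitary conjugate of $e_Q$), whereas the paper computes $p(P,Q)e_Q = \lambda e_Q$ explicitly: it uses $e_Q u_h^* e_Q = E_Q(u_h^*)e_Q$ and then shows each $u_h E_Q(u_h^*)$ lies in $\{0,1\}$ (irreducibility gives $u_h E_Q(u_h^*) \in N'\cap M = \mathbb{C}$, and a nonzero value forces $u_h \in Q$, hence equals $1$). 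The paper's route thus yields the finer information that $\lambda$ is a nonnegative integer at most $[P:N]$, essentially counting how many Weyl-group representatives fall into $Q$ --- this is recorded in the remark following the theorem. Your approach trades that refinement for a cleaner one-line operator inequality. One small correction: your claim that ``$r \le 1$ in the relevant range'' is not generally true; the positivity of the denominator follows instead from $[Q:N]-r = [Q:N]\big(1-[M:P]^{-1}\big)>0$, and similarly for $[P:N]-r$.
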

\begin{proof}
  For any $u\in \mathcal{N}_M(P)$ we have $uE_Q(u^*)\in N^{\prime}\cap
 M =\C$. To see this, let $n\in N$ be arbitrary. Then,
 $uE_Q(u^*).n=uE_Q(u^*n)=uE_Q(u^*nu.u^*)=n.uE_Q(u^*).$

 Let $uE_Q(u^*)=t \in \C.$ If $t\neq 0$ we get $E_Q(u^*) = t u^*$ so
 that $u\in Q$, which implies that $t= uE_Q(u^*) = u u^* = 1$. Thus,
 $uE_Q(u^*) \in \{0, 1\}$ for all $u \in \mathcal{N}_M(P)$.

 Using \Cref{reg-onb}, fix an orthonormal basis $\{u_i\} \subset \mathcal{N}_P(N)$ for
 $P/N$.  Consider the auxiliary operator $p(P,Q) =\sum_iu_ie_Qu^*_i$.
 Since $N \subset M$ is irreducible, we have $p(P,Q) e_Q = \lambda
 e_Q$ (see \cite[Lemma 3.2]{BDLR2017}), where $\lambda = \|p(P,Q)\|$.
 Also, $p(P,Q)e_Q=\sum_iu_ie_Qu^*_ie_Q=\sum_iu_iE_Q(u^*_i)e_Q\in \C
 e_Q$, which yields $ \sum_i u_i E_Q(u_i^*) = \lambda$.  In
 particular, we obtain $ 0 \leq \lambda\leq \lvert
 \mathcal{N}_P(N)/\mathcal{U}(N)\rvert = [P:N]$. Thus, $\lambda-1 \leq
         [P:N]-1$ and the lower bound for $\alpha$ follows from
         \Cref{formulaalphabeta}(1).\smallskip

We also have $\lambda-r\leq [P:N]-r.$ So the lower bound for $\beta$  follows from 
 \Cref{formulaalphabeta}.
\end{proof}
\begin{remark}
Note that in above proof, we also observed that $\|p(P,Q)\|$ is an
integer less than or equal to $[P:N]$.
  \end{remark}

Above theorem imposes some immediate bounds on $[Q:N]$ in terms of $[P:N]$,
as follows.
\begin{corollary}
  Let $(N, P, Q, M)$ be as in \Cref{inequality}. Then, we have the following:
  \begin{enumerate}
\item If $\alpha(P,Q)  \leq \pi/3$, then $[Q:N] \leq 4[P:N] -3$.
  \item If $\alpha(P,Q)  \leq \pi/4$, then $[Q:N] \leq 2[P:N] -1$.
    \item If $\alpha(P,Q)  \leq \pi/6$, then $[Q:N] \leq 4/3 [P:N] -1/3$.
    \end{enumerate}
\end{corollary}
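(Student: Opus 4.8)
The plan is to derive all three bounds directly from the interior-angle inequality in \Cref{inequality}, the only substantive ingredient being the monotonicity of cosine. Recall that \Cref{inequality} supplies $\cos\big(\alpha(P,Q)\big)\leq \sqrt{\tfrac{[P:N]-1}{[Q:N]-1}}$ under the standing hypothesis that $N\subset P$ is regular. Since cosine is strictly decreasing on $[0,\pi]$, a hypothesis $\alpha(P,Q)\leq\theta$ with $0<\theta\le\pi/2$ is equivalent to the lower bound $\cos\big(\alpha(P,Q)\big)\geq\cos\theta$; in each of the three cases $\theta\in\{\pi/3,\pi/4,\pi/6\}$ this lower bound $\cos\theta$ is positive.

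First I would record the values $\cos(\pi/3)=\tfrac12$, $\cos(\pi/4)=\tfrac{1}{\sqrt2}$ and $\cos(\pi/6)=\tfrac{\sqrt3}{2}$. Chaining the angle hypothesis with \Cref{inequality} then gives $\cos\theta\leq\sqrt{\tfrac{[P:N]-1}{[Q:N]-1}}$ in each case. Because $\theta\le\pi/2$ forces $\cos\theta\ge 0$, I may square both sides without reversing the inequality, obtaining $\cos^2\theta\le\tfrac{[P:N]-1}{[Q:N]-1}$. Rearranging for $[Q:N]$ yields $[Q:N]-1\le \cos^{-2}\theta\,\big([P:N]-1\big)$, i.e. $[Q:N]\le \cos^{-2}\theta\,[P:N]-\cos^{-2}\theta+1$.

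Substituting the three values $\cos^2\theta=\tfrac14,\tfrac12,\tfrac34$ produces the multipliers $4,2,\tfrac43$ together with the constants $-3,-1,-\tfrac13$, which are exactly the assertions (1), (2) and (3). The hard part, such as it is, will be nothing more than confirming the positivity of $\cos\theta$ that legitimizes the squaring step; this is automatic since each of $\pi/3,\pi/4,\pi/6$ lies strictly below $\pi/2$. One could equally phrase the argument with the exterior-angle bound of \Cref{inequality}, but it is the interior-angle bound that matches the stated conclusions.
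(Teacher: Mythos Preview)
Your proof is correct and follows essentially the same approach as the paper's own proof: combine the interior-angle bound $\cos\alpha(P,Q)\le\sqrt{\tfrac{[P:N]-1}{[Q:N]-1}}$ from \Cref{inequality} with the monotonicity of cosine to get $\cos\theta\le\sqrt{\tfrac{[P:N]-1}{[Q:N]-1}}$, then square and rearrange. The paper carries this out only for $\theta=\pi/3$ and dismisses the remaining cases with ``Others follow similarly''; your version is slightly more explicit in justifying the squaring step and in treating the three cases uniformly via $\cos^{-2}\theta$, but the argument is the same.
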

\begin{proof}
If $\alpha (P, Q) \leq \pi/3$, then $\cos \alpha (P,Q) \geq \cos
(\pi/3) = 1/2$. So, $\frac{[P:N] - 1}{[Q:N] - 1} \geq 1/4$ and hence $[Q:N] \leq 4 [P:N]
-3$. Others follow similarly.
\end{proof}

As a consequence, when we intuitively try to visualize an irreducible
quadruple $(N, P, Q, M)$ with $[P:N] = 2$ as a $4$-sided structure in
plane (as in \Cref{fig-2}), then it seems that the smaller is the interior angle between $P$ and
$Q$ the shorter is the length (or index) of
$N \subset Q$. This assertion is supported by the following observations:

If $[Q:N] > 7/3$, then it follows from above corollary that
$\alpha(P,Q) > \pi/6$.  Likewise, If $[Q:N] > 3$, then $\alpha(P,Q) >
\pi/4$.  And, if $[Q:N] > 5$, then $\alpha(P,Q) > \pi/3$. In
particular, since $P$ is a minimal subfactor, i.e., $N \subset
P$ admits no intermediate subfactor, in the last scenario, $Q$ cannot
be a minimal subfactor because, by \cite{BDLR2017}, we know that the interior
angle between two minimal intermedite subfactors is always less than $\pi/3$.
Also, observe that if $\alpha(P,Q) \leq \pi/4$, then $[Q:N] \leq 3$
and hence $N \subset Q$ must be a Jones' subfactor.

The reader can get a better feeling of above assertion by making similar calculations for
an irreducible quadruple $(N, P, Q, M)$ such that $P \subset N$ is
regular with $[P:N] = n$, for arbitrary $n$.

\end{document}